\newtheorem{theorem}{Theorem}[section]
\newtheorem{proposition}[theorem]{Proposition}
\newtheorem{remark}{Remark}[section]
\newtheorem{definition}{Definition}[section]
\newcommand{\supp}{\operatorname{supp}}
\newcommand{\singsupp}{\operatorname{sing\, supp}}
\newtheorem{lemma}[theorem]{Lemma}
\theoremstyle{definition}
\begin{document}

\title{On the continuity of the product of distributions in local Sobolev spaces}

\author{Stefan Tuti\'c}
\address{Department of Mathematics and Informatics \\ 
Faculty of Sciences University of Novi Sad \\
Trg Dositeja Obradovi\'ca 3 \\
21000 Novi Sad, Serbia}
\email{stefan.tutic@dmi.uns.ac.rs}

\begin{abstract} 
We consider the space $\mathscr{H}_L ^{s,r} (O)$ consisting of all local Sobolev distributions of order $s$ on an open set $O$ whose Sobolev wave front set of order $r$ is contained in the closed conic set $L\subseteq O\times(\mathbb{R}^m\backslash\{0\})$. We introduce a locally convex topology on $\mathscr{H}_L ^{s,r} (O)$ and show that the ordinary product of smooth functions uniquely extends to a continuous bilinear mapping $\mathscr{H}_{L_1} ^{r_1,r'} (O) \times \mathscr{H}_{L_2} ^{r_2,r''} (O) \to \mathscr{H}_{L} ^{s,r} (O)$, for appropriate $s$ and $r$ when $L_1$ and $L_2$ are in a favorable position. The key ingredient in our proof is to employ H\"ormander's idea of considering the pullback by the diagonal map $x\mapsto (x,x)$ of the tensor product of two distributions.
\end{abstract}

\keywords{Sobolev space, Sobolev wave front set, product of distributions}

\maketitle

\section{Introduction}\label{sec1}

Multiplication of two distributions is one of the fundamental operations in quantum field theory and the theory of partial differential equations. It is well known that their product cannot be well defined in the general case (see \cite{Schwartz}). For some distributions, it is possible to define their product by using wave front sets, which were introduced by H\"ormander (for more details see \cite{HNonlinear}). In \cite{Duistermaat}, Duistermaat and H\"ormander introduced the Sobolev wave front set to study the propagation of singularities of solutions of partial differential equations.

In this paper, we consider Sobolev globally regular distributions of order $s$, defined on the open subset $O$ of $\mathbb{R}^m$, that are microlocally Sobolev regular of order $r$, denoted by $\mathscr{H}_L ^{s,r} (O)$, where the microlocal property is described by the Sobolev wave front set which is contained in a given closed conic subset $L$ of $O\times (\mathbb{R}^m \backslash \{ 0 \})$. The main goal of this paper is to show that the product of two distributions from $\mathscr{H}_{L_1} ^{r',r_1} (O) \times \mathscr{H}_{L_2} ^{r'',r_2} (O)$ is again a distribution of this kind $\mathscr{H}_{L} ^{s,r} (O)$, where $s,r,L$ depend on the initial parameters $r',r_1,L_1,r'',r_2,L_2$. This is obtained by using standard techniques of the pullback by the diagonal mapping of the tensor product of distributions. Moreover, we equip $\mathscr{H}_L ^{s,r} (O)$ with a locally convex topology in a natural way and show that the product we define is a continuous bilinear mapping $\mathscr{H}_{L_1} ^{r_1,r'} (O) \times \mathscr{H}_{L_2} ^{r_2,r''} (O) \to \mathscr{H}_{L} ^{s,r} (O)$.

The motivation for our work comes from the recent results obtained in \cite{Brouder} and \cite{PP}. In \cite{Brouder}, the authors studied the continuity of the product of two distributions using the classical $C^{\infty}$ wave front set. Originally, this was done by H\"ormander, but his topology was weak, in the sense that the pullback mapping was not continuous; it is only sequentially continuous. The authors in \cite{Brouder}, modified the topology so that the pullback mapping is continuous (see also \cite{dab-br}). The ideas they used in their work can be implemented in our case. In \cite{PP}, the authors introduced a locally convex topology on the space $\mathcal{D}_{L}^{\,\prime\, r} (O)$, consisting of the distributions whose Sobolev wave front set of order $r$ is contained in a given closed conic set $L$. For this topology, the pullback mapping is continuous for a certain order of Sobolev microregularity. Employing this notations, we set $\mathscr{H}_L ^{s,r} (O):=H_{loc} ^{s} (O)\cap \mathcal{D}_{L}^{\,\prime\, r} (O)$ and $\mathscr{H}_L ^{s,r} (O)$ will carry the topology induced by the continuous seminorms on $\mathcal{D}_{L}^{\,\prime\, r} (O)$ together with the continuous seminorms on $H_{loc} ^{s} (O)$.

\section{Preliminaries}\label{sec2}

First, we fix the notations we are going to employ throughout the paper. The sets of positive integers, nonnegative integers, and real numbers are denoted by $\mathbb{N}$, $\mathbb{N}_0$ and $\mathbb{R}$, respectively. A subset $V$ of $\mathbb{R}^n$ is a cone if $\lambda x\in V$ for every $\lambda>0,x\in V$. On the other hand, a subset $L$ of $\dot{T}^*O = O\times (\mathbb{R}^n\backslash\{0\})$ is conic if it is a cone in the second variable. The Fourier transform is defined as $\widehat{f} (\xi)=\mathcal{F}f(\xi) := \int_{\mathbb{R}^n} f(x) e^{-ix\xi} \mathrm{d}x$ for $f\in L^1 (\mathbb{R}^n)$, and the Japanese bracket is defined by $\langle x \rangle := (1+|x|^2)^{\frac{1}{2}}$ for all $x\in \mathbb{R}^n$. With $H^s (\mathbb{R}^n) = \{ u\in\mathcal{S}'(\mathbb{R}^n) \mid \langle \cdot \rangle^s \mathcal{F}u \in L^2 (\mathbb{R}^n) \}$ we denote the Sobolev space of order $s\in\mathbb{R}$. The local Sobolev space of order $s\in\mathbb{R}$ defined on the open subset $O$ of $\mathbb{R}^n$ is denoted by
\begin{align}
\label{Lokalni_Soboljev}
H_{loc} ^s (O) = \{ u\in\mathcal{D}' (O) \mid \varphi u \in H^{s} (\mathbb{R}^n) \text{ for all } \varphi\in\mathcal{D} (O) \}.
\end{align}
For more details on Sobolev spaces, we refer to \cite{Adams}.

The Sobolev wave front set $WF^r (u)$ of order $r\in\mathbb{R}$ of $u\in\mathcal{D}'(O)$ was introduced in \cite{Duistermaat} and it is defined as follows.
%For $v\in \mathcal{E}'(O)$ The point $(x,\xi)\in O\times(\mathbb{R}^n\backslash\{0\})$ does not belong to $WF^r(u)$ if there is an open cone $V$ containing $\xi$ and $\varphi\in\mathcal{D}(O)$ satisfying $\varphi(x)\neq 0$ such that $\|\langle\cdot\rangle^r\mathcal{F}(\varphi u)\|_{L^2(V)}<\infty$.
For $r\in\mathbb{R}$ and $v\in \mathcal{E}'(O)$, set
\begin{align*}
    \Sigma^r (v) := \{ \xi\in\mathbb{R}^n \backslash \{ 0 \} \mid \neg (\text{ exists an open cone $V\ni\xi$ such that } \|\langle\cdot\rangle^r\mathcal{F}(v)\|_{L^2(V)}<\infty) \}.
\end{align*}
For  $u\in \mathcal{D}'(O)$,
\begin{align*}
    \Sigma_x ^r (u)
    &:= \bigcap_{ \substack{\varphi\in \mathcal{D} (O) \\ \varphi (x) \neq 0}} \Sigma^r (\varphi u), \quad x\in O, \\
    WF^r (u) &:= \{ (x,\xi)\in O \times (\mathbb{R}^n \backslash \{ 0 \}) \mid \xi \in \Sigma_x ^r (u) \}.
\end{align*}
Set $\mathcal{D}_{L}^{\,\prime\, r} (O) := \{ u\in \mathcal{D}' (O) \mid  WF^r (u) \subseteq L\}$, where $L$ is a closed conic subset of $O \times (\mathbb{R}^n \backslash \{ 0 \})$. Following \cite{PP}, we equip $\mathcal{D}_{L}^{\,\prime\, r} (O)$ with the locally convex topology induced by all seminorms $p_B (u) := \sup_{\varphi\in B} |\langle u,\varphi \rangle|$, where $B$ is a bounded subset of $\mathcal{D}(O)$, together with all seminroms
\begin{align*}
p_{r;\varphi , V} (u): = \left( \int_V \langle \xi \rangle^r |\mathcal{F} (\varphi u) (\xi)|^2 \mathrm{d} \xi \right)^{\frac{1}{2}}
\end{align*}
where $\varphi\in\mathcal{D} (O)$ and the closed cone $V\subseteq\mathbb{R}^n$ satisfy $(\supp \varphi \times V)\cap L = \emptyset$. We recall the following result from \cite{PP}.

\begin{theorem}[{\cite[Theorem 3.21]{PP}}]
\label{PPteorema}
    Let $O$ and $U$ be open subsets of $\mathbb{R}^m$ and $\mathbb{R}^n$, respectively, let $f:O \to U$ be a smooth map and let $L$ be a closed conic subset of $U\times (\mathbb{R}^n \backslash \{ 0 \})$ satisfying $L\cap \mathcal{N}_f = \emptyset$, where
    \begin{align*}
        \mathcal{N}_f = \{ (f(x), \eta) \in U \times \mathbb{R}^n \mid x\in O, \ ^t f'(x) \eta = 0 \}.
    \end{align*}
    The pullback $f^*: \mathcal{C}^{\infty}(U) \to \mathcal{C}^{\infty}(O), f^*(u)=u\circ f$, uniquely extends to a well defined and continuous mapping $f^*: \mathcal{D}_{L}^{\,\prime\, r_2}  (U) \to \mathcal{D}_{f^* L}^{\,\prime\, r_1} (O)$ when $r_2 - r_1>\frac{n}{2} , r_2 > \frac{n}{2}$, where
    \begin{align}
    \label{pullback_skupa}
        f^*L = \{ (x, \, ^tf'(x)\eta)\in O \times\mathbb{R}^m \mid (f(x),\eta)\in L \}.
    \end{align}
    If $f$ has a constant rank $k\geq 1$, then this is valid when $r_2-r_1\geq \frac{n-k}{2}$ and $r_2 > \frac{n-k}{2}$. When $f$ is a submersion, $f^*: \mathcal{D}_{L}^{\,\prime\, r_2}  (U) \to \mathcal{D}_{f^* L}^{\,\prime\, r_1} (O)$ is well-defined and continuous even when $r_2\geq r_1$. Consequently, if $f$ is a diffeomorphism, then $f^*: \mathcal{D}_{L}^{\,\prime\, r}  (U) \to \mathcal{D}_{f^* L}^{\,\prime\, r} (O)$ is a topological isomorphism for each $r\in\mathbb{R}$.
\end{theorem}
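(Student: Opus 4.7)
The plan is to prove Theorem \ref{PPteorema} by localizing via the constant rank theorem and then verifying the required weighted $L^2$ bounds on the Fourier transform of $\varphi\cdot f^*u$, tracking continuity constants in terms of the PP seminorms. The problem is local, so by a partition of unity I reduce to a neighborhood of a point $x_0 \in O$. When $f$ has constant rank $k$ near $x_0$, the constant rank theorem furnishes local coordinates in which $f(x_1,\ldots,x_m) = (x_1,\ldots,x_k,0,\ldots,0)$, and $f$ factors as $\iota\circ\pi$ with $\pi$ a linear submersion $\mathbb{R}^m \to \mathbb{R}^k$ and $\iota$ the inclusion $\mathbb{R}^k \hookrightarrow \mathbb{R}^n$. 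It then suffices to handle two model situations: $f$ a projection (no loss of Sobolev regularity) and $f$ a coordinate inclusion (loss of $(n-k)/2$ via the microlocalized Sobolev trace theorem). The non-constant-rank case is subsumed by the crude bound $(n-k)/2 \le n/2$ under the hypothesis $r_2>n/2$.

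For the submersion model, $f^*u = u\otimes 1$ as a tempered distribution in the transverse variables, and one verifies directly that $\mathcal{F}(\varphi\cdot f^*u)(\xi,\eta)$ is concentrated near $\eta$ proportional to $^t\pi'\xi$, which corresponds to $f^*L$ in $\dot{T}^*O$. Since no integration over fibers is required, the only weighted $L^2$ mass comes from the partial Fourier transform already appearing in the seminorm $p_{r_2;\chi,W}(u)$ on $U$; no Sobolev exponent is lost. For the embedding model, I localize $u$ by $\chi\in\mathcal{D}(U)$, split its Fourier transform into a piece supported in a conic neighborhood of $L^c$ (where $p_{r_2;\chi,W}(u)$ provides $L^2$ weighted control) and a rapidly decaying remainder controlled by $p_B$-type seminorms via Parseval; integrating out the $n-k$ transverse frequencies then yields the loss of $(n-k)/2$ in the Sobolev exponent, matching the hypothesis.

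For the continuity claim, I have to majorize both families of seminorms on $\mathcal{D}_{f^*L}^{\,\prime\,r_1}(O)$ by finite sums of seminorms on $\mathcal{D}_{L}^{\,\prime\,r_2}(U)$. The weak seminorms $p_B$ transfer essentially trivially, since pulling back test functions against $f^*u$ amounts to testing $u$ against $f_*(\varphi\,|\det Df|^{-1})$-type expressions, which send bounded sets to bounded sets locally. The hard part will be the microlocal seminorms $p_{r_1;\varphi,V}$: given a cutoff $\varphi\in\mathcal{D}(O)$ and a cone $V$ with $(\supp\varphi\times V)\cap f^*L = \emptyset$, I need to \emph{construct} a finite collection of pairs $(\chi_j, W_j)$ with $(\supp\chi_j\times W_j)\cap L = \emptyset$ whose associated seminorms jointly dominate $p_{r_1;\varphi,V}(f^*u)$. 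This construction, relying on a compactness argument on the cosphere bundle to cover the relevant image conic region while preserving disjointness with $L$, is in my view the main technical obstacle of the proof.
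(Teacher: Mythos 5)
First, note that the paper does not prove this statement at all: Theorem \ref{PPteorema} is imported verbatim from \cite[Theorem 3.21]{PP} and used as a black box, so your proposal can only be measured against the standard proof in that reference (which follows the Duistermaat--H\"ormander/H\"ormander scheme of estimating $\mathcal{F}(\varphi\, f^*u)$ via oscillatory integrals), not against anything internal to this paper.

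Measured that way, there is a genuine gap at the very first step of your plan. A general smooth map has no reason to have constant rank on any neighborhood of a given point, so the constant rank theorem gives you no local factorization $f=\iota\circ\pi$, and the sentence ``the non-constant-rank case is subsumed by the crude bound $(n-k)/2\le n/2$'' is a non sequitur: there is no $k$ and no model pair (projection, inclusion) to which that bound could be applied. The logic of the actual theorem runs the other way around: the general case ($r_2-r_1>\frac n2$, $r_2>\frac n2$) is the base case and must be proved directly, e.g.\ by writing, for $\chi\in\mathcal{D}(U)$ equal to $1$ near $f(\supp\varphi)$, $\mathcal{F}(\varphi\, f^*u)(\xi)=(2\pi)^{-n}\int \widehat{\chi u}(\eta)\, I_\varphi(\xi,\eta)\,\mathrm{d}\eta$ with $I_\varphi(\xi,\eta)=\int e^{i(f(x)\cdot\eta-x\cdot\xi)}\varphi(x)\,\mathrm{d}x$, then splitting the $\eta$-integration into a closed cone $W$ with $(\supp\chi\times W)\cap L=\emptyset$ (where the seminorm $p_{r_2;\chi,W}(u)$ gives weighted $L^2$ control and Cauchy--Schwarz in $\eta$ against $\langle\eta\rangle^{-2r_2}$ produces exactly the $\frac n2$ loss) and the complementary conic region near $L$, where only polynomial bounds on $\widehat{\chi u}$ are available and the estimate instead comes from non-stationary phase: for $\xi\in V$ with $(\supp\varphi\times V)\cap f^*L=\emptyset$ the phase has no critical points, so $I_\varphi(\xi,\eta)=O(\langle(\xi,\eta)\rangle^{-N})$ there. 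This oscillatory-integral engine is entirely absent from your sketch, and without it neither the well-definedness nor the continuity estimate for $p_{r_1;\varphi,V}(f^*u)$ can be obtained for a general $f$. The constant-rank and submersion refinements are then improvements on top of this (and your trace-theorem heuristic for the embedding model, with loss $(n-k)/2$ from integrating out the transverse frequencies, is the right picture there, although the split you describe is muddled: in the model case the whole integration cone over $V$, including the purely transverse directions, avoids $L$ because those directions lie in $\mathcal{N}_\iota$, so no ``rapidly decaying remainder controlled by $p_B$ via Parseval'' occurs).

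Two further points would need repair even within your framework: the $p_B$-seminorms of $f^*u$ do not transfer ``essentially trivially'' when $f$ is not a submersion, since $f^*u$ is not defined by duality against $f_*(\varphi\,|\det Df|^{-1})$ in that case (these bounds also come out of the pointwise estimate on $\mathcal{F}(\varphi\, f^*u)$); and the uniqueness of the extension, which requires the (sequential) density of smooth functions in $\mathcal{D}_L^{\,\prime\, r_2}(U)$ for the stated topology, is never addressed. By contrast, the compactness construction of the pairs $(\chi_j,W_j)$ that you single out as the main obstacle is the routine part of the argument.
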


\section{The main results}\label{sec3} 

We start with the following technical result.

\begin{lemma}
\label{WF_tenz_proiz}
Let $O\subseteq \mathbb{R}^m$ and $U\subseteq \mathbb{R}^n$ be open sets. Let $r,r_1,r_2,r',r''\in\mathbb{R}$ satisfy $r \leq r_1 + \min \{0,r''\}$ and $r \leq r_2 + \min \{0,r'\}$. For all $u\in H_{loc} ^{r'} (O), v\in H_{loc} ^{r''} (U)$ it holds that
\begin{align}
\label{talsnifrontinkl}
\begin{split}
WF^r (u\otimes v)
&\subseteq (WF^{r_1} (u) \times WF^{r_2} (v)) \cup (WF^{r_1} (u) \times (\supp v \times \{0\})) \\
&\quad \cup ((\supp u \times \{0 \}) \times WF^{r_2} (v)).
\end{split}
\end{align}
\end{lemma}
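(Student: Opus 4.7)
The plan is to establish the inclusion pointwise. Fix any
\[
((x_0,y_0),(\xi_0,\eta_0))\in (O\times U)\times(\mathbb{R}^{m+n}\setminus\{0\})
\]
that lies outside all three sets on the right-hand side of \eqref{talsnifrontinkl}; I would exhibit a product cut-off $\varphi\otimes\psi$ with $\varphi\in\mathcal{D}(O)$, $\psi\in\mathcal{D}(U)$, $\varphi(x_0)\psi(y_0)\neq 0$, together with a conic neighborhood $W$ of $(\xi_0,\eta_0)$ for which
\[
\int_W \langle(\xi,\eta)\rangle^{2r}\,|\mathcal{F}(\varphi u)(\xi)|^2\,|\mathcal{F}(\psi v)(\eta)|^2\,d\xi\,d\eta <\infty,
\]
exploiting the factorization $\mathcal{F}((\varphi u)\otimes(\psi v))(\xi,\eta)=\mathcal{F}(\varphi u)(\xi)\,\mathcal{F}(\psi v)(\eta)$. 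The available ingredients are the microlocal $r_1$-bound on $\mathcal{F}(\varphi u)$ on a conic neighborhood of $\xi_0$ when $(x_0,\xi_0)\notin WF^{r_1}(u)$ (and its $r_2$-analogue for $v$), together with the global estimates $\varphi u\in H^{r'}(\mathbb{R}^m)$ and $\psi v\in H^{r''}(\mathbb{R}^n)$ that come for free from $u\in H_{loc}^{r'}(O)$ and $v\in H_{loc}^{r''}(U)$.

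I would split into three cases. In \emph{Case A} ($\xi_0\neq 0$ and $\eta_0\neq 0$) the hypothesis forces $(x_0,\xi_0)\notin WF^{r_1}(u)$ or $(y_0,\eta_0)\notin WF^{r_2}(v)$; working with the former, pick $\varphi$ and a conic neighborhood $V\ni\xi_0$ carrying the $r_1$-estimate, any $\psi$ with $\psi(y_0)\neq 0$, and shrink $W$ around $(\xi_0,\eta_0)$ until its $\xi$-projection lies in $V$ and $\langle\xi\rangle$, $\langle\eta\rangle$, $\langle(\xi,\eta)\rangle$ are pairwise comparable on $W$. The inequality $r\leq r_1+\min\{0,r''\}\leq r_1+r''$ then yields $\langle(\xi,\eta)\rangle^{2r}\leq C\langle\xi\rangle^{2r_1}\langle\eta\rangle^{2r''}$ on $W$, and Fubini factors the integral into the product $\int_V\langle\xi\rangle^{2r_1}|\mathcal{F}(\varphi u)|^2\,d\xi\cdot\int_{\mathbb{R}^n}\langle\eta\rangle^{2r''}|\mathcal{F}(\psi v)|^2\,d\eta$, both finite. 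In \emph{Case B} ($\xi_0\neq 0$, $\eta_0=0$) either $y_0\notin\supp v$, handled by choosing $\psi(y_0)\neq 0$ supported in a neighborhood of $y_0$ disjoint from $\supp v$ so that $\psi v\equiv 0$, or else $(x_0,\xi_0)\notin WF^{r_1}(u)$, in which case I shrink $W$ so that $|\eta|\leq C|\xi|$ on $W$, giving $\langle(\xi,\eta)\rangle\sim\langle\xi\rangle$; the estimate $\langle(\xi,\eta)\rangle^{2r}\leq C\langle\xi\rangle^{2r_1}\langle\eta\rangle^{2r''}$ then reduces to $r\leq r_1$ when $r''\geq 0$ (since $\langle\eta\rangle^{2r''}\geq 1$) and to $r\leq r_1+r''$ when $r''<0$ (via $\langle\eta\rangle^{2r''}\gtrsim\langle\xi\rangle^{2r''}$), both captured exactly by $r\leq r_1+\min\{0,r''\}$. \emph{Case C} ($\xi_0=0$, $\eta_0\neq 0$) is symmetric and invokes the companion hypothesis involving $r_2$ and $r'$.

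The main obstacle I foresee is the geometric calibration of $W$: a conic neighborhood of $(\xi_0,\eta_0)$ in $\mathbb{R}^{m+n}\setminus\{0\}$ is not a priori a product of cones in $\mathbb{R}^m$ and $\mathbb{R}^n$, so I would realize it as $\mathbb{R}_{>0}\cdot U_0$ for a spherical neighborhood $U_0$ of $(\xi_0,\eta_0)/|(\xi_0,\eta_0)|$ and verify by a direct estimate on the unit sphere that shrinking $U_0$ delivers the comparability $\langle\xi\rangle\sim\langle\eta\rangle$ (Case A) or the domination $|\eta|\lesssim|\xi|$ (Case B). Once $W$ is correctly calibrated, the remaining exponent bookkeeping and the application of Fubini are routine, and the precise form $r\leq r_j+\min\{0,r^\bullet\}$ of the hypothesis arises as exactly the condition imposed by the worst-case sign of the passive exponent $r''$ (respectively $r'$).
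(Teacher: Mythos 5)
Your proposal is correct and follows essentially the same route as the paper: localize with product cut-offs $\varphi\otimes\psi$, split into the cases $\xi_0=0$, $\eta_0=0$, $\xi_0\neq 0\neq\eta_0$, and on a suitable conic region bound $\langle(\xi,\eta)\rangle^{2r}$ by $\langle\xi\rangle^{2r_1}\langle\eta\rangle^{2r''}$ (or the symmetric bound), so that Fubini factors the integral into a microlocal estimate for one factor and a global $H^{r'}$ or $H^{r''}$ estimate for the other, with the $\supp$ terms handled exactly as you indicate. The only cosmetic differences are that the paper packages the reduction to product cut-offs as an identity for $\Sigma^s_{(x_0,y_0)}(u\otimes v)$ (of which only the trivial direction is actually needed, as in your witness-cut-off argument) and uses regions of the form $\{(\xi,\eta):\eta\in V,\ |\xi|<R|\eta|\}$ instead of your narrow cones with comparable brackets; neither affects correctness.
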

\begin{proof}
We start off by showing
\begin{align}
\label{sigma}
\Sigma^r (u\otimes v)
\subseteq
(\Sigma^{r_1} (u) \times \Sigma^{r_2} (v)) \cup (\Sigma^{r_1} (u) \times \{0\}) \cup (\{0\} \times \Sigma^{r_2} (v)),
\end{align}
for $u\in \mathcal{E}' (O) \cap H_{loc} ^{r'} (O) , v\in \mathcal{E}' (U) \cap H_{loc} ^{r''} (U)$. Let $(\xi_0,\eta_0)\notin (\Sigma^{r_1} (u) \times \Sigma^{r_2} (v)) \cup (\Sigma^{r_1} (u) \times \{0\}) \cup (\{0\} \times \Sigma^{r_2} (v))$ satisfies $(\xi_0,\eta_0)\neq (0,0)$. We distinguish three cases:
\begin{enumerate}
\item \underline{$\xi_0 = 0$:}

From $(\xi_0,\eta_0)\notin \{0\} \times \Sigma^{r_2} (v)$, that is $\eta_0 \notin \Sigma^{r_2} (v)$, we get there exists an open cone $V$ which is a neighborhood of $\eta_0$ such that
\begin{align}
\label{eta0}
\lVert \langle\cdot \rangle^{r_2} \mathcal{F} v \rVert_{L^2 (V)}
< \infty.
\end{align}
Set $W:=\{(\xi , \eta) \in \mathbb{R}^{m+n} \mid \eta\in V , |\xi| < |\eta| \}$. Clearly, $(\xi_0,\eta_0)\in W$. If $r'\geq 0$ then $\hat{u}\in L^2 (\mathbb{R}^m)$, because $u\in H_{loc}  ^{r'} (O)$ and has a compact support. Therefore,
\begin{align*}
\lVert \langle \cdot \rangle^{r} \mathcal{F} (u\otimes v)  \rVert_{L^2 (W)} ^2
&= \iint_{\substack{ \eta\in V \\ |\xi|<|\eta| }} \langle (\xi ,\eta) \rangle^{2r} |\mathcal{F}u (\xi)|^2 |\mathcal{F}v (\eta)|^2 \mathrm{d}\xi \mathrm{d}\eta \\
\boxed{\substack{\langle (\xi ,\eta) \rangle^{2r} \leq c \langle \eta \rangle^{2r}, \\ \text{for } |\xi| < |\eta| }}
&\leq c \iint_{\substack{ \eta\in V \\ |\xi|<|\eta| }} \langle \eta \rangle^{2r} |\mathcal{F}u (\xi)|^2 |\mathcal{F}v (\eta)|^2 \mathrm{d}\xi \mathrm{d}\eta\\
\boxed{\substack{ r \leq r_2 + \min \{0,r'\} = r_2 \\ }}
&\leq c \iint_{\substack{ \eta\in V \\ |\xi|<|\eta| }} \langle \eta \rangle^{2r_2} |\mathcal{F}u (\xi)|^2 |\mathcal{F}v (\eta)|^2 \mathrm{d}\xi \mathrm{d}\eta\\
&\leq c \iint_{\substack{ \eta\in V \\ \xi \in \mathbb{R}^m }} \langle \eta \rangle^{2r_2} |\mathcal{F}u (\xi)|^2 |\mathcal{F}v (\eta)|^2 \mathrm{d}\xi \mathrm{d}\eta\\
&= c\int_{\mathbb{R}^m} |\mathcal{F}u (\xi)|^2 \mathrm{d}\xi \int_{\eta\in V} \langle \eta \rangle^{2r_2} |\mathcal{F}v (\eta)|^2 \mathrm{d}\eta
<\infty.
\end{align*}
Now, let $r'<0$. When $r_2 \geq 0 $, and $|\xi|\leq |\eta|$, it follows
\begin{align}
\label{triv}
\langle ( \xi , \eta ) \rangle^r
\leq \langle ( \xi , \eta ) \rangle^{r_2} \langle ( \xi , \eta ) \rangle^{r'}
\leq c \langle \eta \rangle^{r_2} \langle \xi \rangle^{r'}.
\end{align}
For $r_2 < 0$, \eqref{triv} trivially holds. Therefore,
\begin{align*}
\lVert \langle \cdot \rangle^{r} \mathcal{F} (u\otimes v) \rVert_{L^2 (W)} ^2
&\leq c^2 \iint_{\substack{ \eta\in V \\ |\xi|<|\eta| }} \langle \eta \rangle^{2r_2} \langle \xi \rangle^{2r'} |\mathcal{F}u (\xi)|^2 |\mathcal{F}v (\eta)|^2 \mathrm{d}\xi \mathrm{d}\eta\\
&\leq c^2\int_{\xi\in\mathbb{R}^m} \langle \xi \rangle^{2r'} |\mathcal{F}u (\xi)|^2 \mathrm{d}\xi \int_{\eta\in V } \langle \eta \rangle^{2r_2} |\mathcal{F}v (\eta)|^2 \mathrm{d}\eta
< \infty,
\end{align*}
since $u\in H_{loc}  ^{r'} (O)$ and has a compact support.

\item \underline{$\eta_0 = 0$:}

The proof is analogous to the previous case.

\item \underline{$\xi_0\neq0 , \eta_0\neq 0 $:}

Since $(\xi_0, \eta_0)\notin \Sigma^{r_1} (u) \times \Sigma^{r_2} (v)$, lets assume that $\eta_0 \notin \Sigma^{r_2} (v)$. This means that \eqref{eta0} holds true. Choose $R>\frac{|\xi_0|}{|\eta_0|}$ and notice that $(\xi_0 , \eta_0) \in \{ (\xi , \eta)\in \mathbb{R}^{m+n} \mid \eta\in V , |\xi| < R |\eta | \} =: W$. Now, arguing in the same manner as in the first case, we get that
\begin{align*}
\lVert \langle \cdot\rangle^{r} \mathcal{F} (u\otimes v) \rVert_{L^2 (W)} < \infty.
\end{align*}
The case $\xi_0 \notin \Sigma^{r_1} (u)$ follows analogously.
\end{enumerate}
For $u\in \mathcal{D}' (O), v\in \mathcal{D}' (U), (x_0 , y_0) \in O\times U$ and $s\in\mathbb{R}$ it holds that
\begin{align}
\label{sigmaxy}
\begin{split}
&\Sigma_{(x_0 , y_0)} ^s (u\otimes v)\\
&= \bigcap \{ \Sigma^s ( (\varphi u) \otimes (\psi v) ) \mid \varphi \in \mathcal{D} (O) , \varphi (x_0) \neq 0 , \psi\in\mathcal{D} (U) , \psi (y_0) \neq 0\}.
\end{split}
\end{align}
This follows from the fact that $\Sigma^s (\rho w) \subseteq \Sigma^s (w)$ for $\rho \in \mathcal{D} (O) , w \in \mathcal{E}' (O)$. To see this, assume that $(\xi_0 , \eta_0)$ belongs to the right hand side of (\ref{sigmaxy}). Then for arbitrary $\phi\in\mathcal{D} (O\times U) , \phi (x_0 , y_0) \neq 0$ we can choose open sets $M\subseteq O , N\subseteq U$ such that $(x_0,y_0)\in M\times N$ and $\phi \neq 0$ on $M\times N$. Pick $\varphi\in \mathcal{D} (M)$ and $\psi\in\mathcal{D}(N)$ such that $\varphi (x_0) \neq 0$ and $\psi (y_0) \neq 0$. As $\varphi \otimes \psi = (\varphi \otimes \psi)\phi/\phi$, we infer $(\xi_0 , \eta_0) \in \Sigma^s ((\varphi u) \otimes (\psi v)) = \Sigma^s (\phi ((\varphi \otimes \psi)/\phi) (u\otimes v)) \subseteq \Sigma^s (\phi (u\otimes v)) $. The other inclusion follows immediately from the definition of the set $\Sigma_{(x_0 , y_0)} ^s (u\otimes v)$.

Now, let $u\in H_{loc} ^{r'} (O), v\in H_{loc} ^{r''} (U)$. From (\ref{sigma}) and (\ref{sigmaxy}) we infer
\begin{align*}
\Sigma_{(x_0 , y_0)} ^r (u\otimes v)
\subseteq (\Sigma_{x_0 } ^{r_1} (u) \times \Sigma_{y_0 } ^{r_2} (v)) \cup (\Sigma_{x_0 } ^{r_1} (u) \times \{0 \}) \cup ( \{0 \} \times \Sigma_{y_0 } ^{r_2} (v)),
\end{align*}
and therefore
\begin{align*}
WF^r (u\otimes v)
&= \bigcup_{\substack{ x\in\supp u \\ y\in \supp v }} \{(x,y)\} \times \Sigma_{(x,y)} ^r (u\otimes v)\\
&\subseteq (WF^{r_1} (u) \times WF^{r_2} (v)) \cup (WF^{r_1} (u) \times (\supp v \times \{0\}))\\
&\quad \cup ((\supp u \times \{0 \}) \times WF^{r_2} (v)).
\end{align*}
\end{proof}

Since in the previous lemma we assumed a certain global Sobolev regularity, we introduce the following space.

\begin{definition}
\label{definicija_prostora}
Let $s,r\in\mathbb{R}$. Let $O\subseteq\mathbb{R}^m$ be an open set and $L$ a closed conic subset of $O\times (\mathbb{R}^m \backslash \{0\})$. We define
\begin{align*}
\mathscr{H}_L ^{s,r} (O) := \{ u\in H_{loc} ^s (O) \mid WF^r (u) \subseteq L \}=H_{loc} ^s (O)\cap \mathcal{D}_{L}^{\,\prime\, r} (O).
\end{align*}
\end{definition}
The index $s$ represents global Sobolev regularity of the distribution $u$ while $r$ represents microlocal Sobolev regularity of $u$. We equip $\mathscr{H}_L ^{s,r} (O)$ with the locally convex topology induced by the family of seminorms:
\begin{align}
\label{seminorme}
\begin{split}
p_{r;\varphi,V} (u)
&= \left( \int_V \langle \xi \rangle^{2r} |\mathcal{F} (\varphi u) (\xi)|^2 \mathrm{d}\xi \right)^{\frac{1}{2}}, \quad
q_{s;\psi} (u)
= \left( \int_{\mathbb{R}^m} \langle \xi \rangle^{2s} |\mathcal{F} (\psi u) (\xi)|^2 \mathrm{d}\xi \right)^{\frac{1}{2}},
\end{split}
\end{align}
where $\psi\in\mathcal{D}(O)$ and $\varphi\in\mathcal{D} (O)$ and the closed cone $V\subseteq \mathbb{R}^m$ satisfy $(\supp \varphi \times V) \cap L = \emptyset$. It is straightforward to verify that $\mathscr{H}^{s,r}_L(O)$ is complete. Furthermore, \cite[Proposition 3.13]{PP} implies that $\mathcal{D}(O)$ is sequentially dense in $\mathscr{H}_L ^{s,r} (O)$.

\begin{remark}
\label{Odnos_indeksa}
Since $H_{loc} ^s (O) \subseteq H_{loc} ^r (O)$ when $r\leq s$, $WF^{r} (u) = \emptyset$ for all $u\in H_{loc} ^s (O)$ and consequently $\mathscr{H}_L ^{s,r} (O)=H_{loc} ^s (O)$; it is straightforward to verify that the identity is topological.\\
\indent Furthermore, for any $s,r\in\mathbb{R}$, the following topological identities also hold true (cf. \cite[Remark 3.6]{PP}):
$$
\mathscr{H}_{\emptyset}^{s,r} (O)=H_{loc}^{\max\{s,r\}} (O),\quad \mathscr{H}_{O\times(\mathbb{R}^m\backslash\{0\})}^{s,r} (O)=H_{loc}^s(O).
$$
\end{remark}

\begin{proposition}
\label{Frese}
Let $s,r\in\mathbb{R}$ and let $L$ be a closed conic subset of $O\times(\mathbb{R}^m\backslash\{0\})$ satisfying $O\times(\mathbb{R}^m\backslash\{0\})\backslash L\neq \emptyset$.
%Let the open balls $O_{j,k},O'_{j,k},O''_{j,k}\subseteq O$, the open cones $V_{j,k}, V'_{j,k}, V''_{j,k}\subseteq \mathbb{R}^m$ and the functions $\phi_{j,k}\in\mathcal{D}(O)$ and $\widetilde{\phi}_{j,k}\in\mathcal{C}^{\infty}(\mathbb{R}^m)$, $j,k\in\mathbb{Z}_+$, be as in \cite[Proposition 3.7]{PP}.
Let the functions $\phi_{j,k}\in\mathcal{D}(O)$ and $\widetilde{\phi}_{j,k}\in\mathcal{C}^{\infty}(\mathbb{R}^m)$, $j,k\in\mathbb{N}$, be as in \cite[Proposition 3.7]{PP}. Then the mapping
\begin{align}
\mathscr{H}^{s,r}_L(O)\rightarrow H^s_{loc}(O)\times (L^2(\mathbb{R}^m)^{\mathbb{N}\times \mathbb{N}}),\quad u\mapsto(u,\mathbf{f}_u),\,\, \mbox{where}\label{equ-for-map}\\
\mathbf{f}_u(j,k):=\langle\cdot\rangle^r \widetilde{\phi}_{j,k}\mathcal{F}(\phi_{j,k} u),\, j,k\in\mathbb{N},\nonumber
\end{align}
is a well-defined topological imbedding with closed image. Consequently, $\mathscr{H}^{s,r}_L(O)$ is a reflexive Fr\'echet space.
\end{proposition}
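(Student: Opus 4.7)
The plan is to upgrade \cite[Proposition 3.7]{PP}, which provides the analogous embedding of $\mathcal{D}_{L}^{\,\prime\, r}(O)$ into $\mathcal{D}'(O)\times L^2(\mathbb{R}^m)^{\mathbb{N}\times\mathbb{N}}$, by replacing the $\mathcal{D}'(O)$ factor with the stronger $H^s_{loc}(O)$ factor. By construction both inclusions $\mathscr{H}^{s,r}_L(O)\hookrightarrow H^s_{loc}(O)$ and $\mathscr{H}^{s,r}_L(O)\hookrightarrow \mathcal{D}_{L}^{\,\prime\, r}(O)$ are continuous; and the continuous inclusion $H^s_{loc}(O)\hookrightarrow \mathcal{D}'(O)$ makes the ``$\mathcal{D}'(O)$-part'' of the PP-embedding automatically dominated by the $H^s_{loc}(O)$-part.

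First I would verify well-definedness and continuity of \eqref{equ-for-map}. The first component is the continuous inclusion into $H^s_{loc}(O)$. For the second, the cutoffs $\phi_{j,k},\widetilde{\phi}_{j,k}$ from \cite[Proposition 3.7]{PP} are designed so that for each $(j,k)$ there is a closed cone $V_{j,k}\supseteq \supp\widetilde{\phi}_{j,k}$ with $(\supp\phi_{j,k}\times V_{j,k})\cap L=\emptyset$; it follows that
\[
\|\mathbf{f}_u(j,k)\|_{L^2(\mathbb{R}^m)}\leq C_{j,k}\, p_{r;\phi_{j,k},V_{j,k}}(u),
\]
yielding both $\mathbf{f}_u(j,k)\in L^2(\mathbb{R}^m)$ and the continuity of the second component.

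Next I would establish the topological embedding and the closedness of the image. Injectivity is immediate from the first coordinate. For the embedding property, each generating seminorm of $\mathscr{H}^{s,r}_L(O)$ must be dominated by finitely many coordinate seminorms of the target: the $q_{s;\psi}$ are controlled directly by $H^s_{loc}(O)$-seminorms, while by \cite[Proposition 3.7]{PP} every $p_{r;\varphi,V}$ is dominated by a finite combination of $\|\mathbf{f}_u(j,k)\|_{L^2}$ together with a bounded-set seminorm $p_B$ of $\mathcal{D}'(O)$; the latter is in turn dominated by a seminorm $q_{s;\chi}$, since for any $\mathcal{D}(O)$-bounded set $B$ the family $\{\varphi:\varphi\in B\}$ sits in a common compact $K$ and is bounded in $H^{-s}(\mathbb{R}^m)$, so $|\langle u,\varphi\rangle|=|\langle \chi u,\varphi\rangle|\leq q_{s;\chi}(u)\sup_{\varphi\in B}\|\varphi\|_{H^{-s}}$ for $\chi\in\mathcal{D}(O)$ equal to $1$ near $K$. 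For closedness, if $(u_n,\mathbf{f}_{u_n})\to(u,\mathbf{g})$ in $H^s_{loc}(O)\times L^2(\mathbb{R}^m)^{\mathbb{N}\times\mathbb{N}}$, then $u_n\to u$ in $\mathcal{D}'(O)$, hence $\mathcal{F}(\phi_{j,k}u_n)\to \mathcal{F}(\phi_{j,k}u)$ in $\mathcal{S}'(\mathbb{R}^m)$ for each $(j,k)$; combined with the $L^2$-convergence this forces $\mathbf{g}(j,k)=\mathbf{f}_u(j,k)$. In particular every $\mathbf{f}_u(j,k)\in L^2(\mathbb{R}^m)$, which by \cite[Proposition 3.7]{PP} is equivalent to $WF^r(u)\subseteq L$, so $u\in\mathscr{H}^{s,r}_L(O)$.

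Once the map is a closed topological embedding, the reflexive-Fr\'echet conclusion is routine: $H^s_{loc}(O)$ is a reflexive Fr\'echet space as a projective limit of a countable family of Hilbert spaces, and $L^2(\mathbb{R}^m)^{\mathbb{N}\times\mathbb{N}}$ is likewise reflexive Fr\'echet as a countable product of Hilbert spaces; their product inherits these properties, and any closed subspace of a reflexive Fr\'echet space is again reflexive Fr\'echet. The only real technical content is the equivalence between $p_{r;\varphi,V}$-control and $L^2$-control of the family $(\mathbf{f}_u(j,k))_{j,k}$, which is not redone here but invoked from \cite[Proposition 3.7]{PP}; the novelty is merely the routine replacement of the $\mathcal{D}'(O)$-coordinate by the $H^s_{loc}(O)$-coordinate.
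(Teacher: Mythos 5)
Your proposal is correct and takes essentially the same approach as the paper: both treat \eqref{equ-for-map} as the restriction of the embedding $\mathcal{I}$ from \cite[Proposition 3.7]{PP}, bound $q_{s;\psi}$ directly by $H^s_{loc}(O)$-seminorms and $p_{r;\varphi,V}$ via the $L^2$-coordinates plus the fact that $\mathcal{D}'(O)$-seminorms are dominated by $H^s_{loc}(O)$-seminorms (which you spell out explicitly via the $H^{-s}$ duality), and conclude reflexivity from the codomain being a countable product of reflexive Fr\'echet spaces. The only minor divergence is the closedness of the image: the paper deduces it at once from the completeness of $\mathscr{H}^{s,r}_L(O)$, whereas you identify the limit directly and invoke the characterization of $WF^r(u)\subseteq L$ through the family $\mathbf{f}_u(j,k)$ underlying \cite[Proposition 3.7]{PP} --- both arguments work.
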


\begin{proof}
The proof that the mapping is a well-defined continuous injection is straightforward and we omit it. Notice that \eqref{equ-for-map} is just the restriction of the map $\mathcal{I}:\mathcal{D}'^r_L(O)\rightarrow \mathcal{D}'(O)\times (L^2(\mathbb{R}^m)^{\mathbb{N}\times \mathbb{N}})$ in \cite[Proposition 3.7]{PP} to $\mathscr{H}^{s,r}_L(O)$. To show that \eqref{equ-for-map} is a topological imbedding, it suffices to prove that the seminorms $p_{r;\varphi,V}(u)$ and $q_{s;\psi}(u)$ of $u\in\mathscr{H}^{s,r}_L(O)$ are bounded by continuous seminorms on $H^s_{loc}(O)\times (L^2(\mathbb{R}^m)^{\mathbb{N}\times \mathbb{N}})$ of the image of $u$ under \eqref{equ-for-map}. This clearly holds for $q_{s;\psi}(u)$. It also holds for $p_{r;\varphi,V}(u)$ in view of \cite[Proposition 3.7]{PP} and the fact that the continuous seminorms on $\mathcal{D}'(O)$ of $u$ are bounded by continuous seminorms on $H^s_{loc}(O)$. This shows that \eqref{equ-for-map} is a topological imbedding. Its image is closed since $\mathscr{H}^{s,r}_L(O)$ is complete. The fact that $\mathscr{H}^{s,r}_L(O)$ is a reflexive Fr\'echet space now follows from the fact that the codomain of \eqref{equ-for-map} is a countable topological product of reflexive Fr\'echet spaces.
\end{proof}

\begin{remark}
Employing \cite[Remark 3.9]{PP}, we infer that the bilinear mapping $\mathcal{C}^{\infty}(O)\times \mathscr{H}^{s,r}_L(O)\rightarrow \mathscr{H}^{s,r}_L(O)$, $(\varphi ,u)\mapsto \varphi u$, is well-defined and separately continuous and, as both spaces are Fr\'echet, \cite[Theorem 1, p. 158]{Kete2} implies that it is continuous.
\end{remark}

Our next goal is to show that the following bilinear mapping is well-defined and continuous:
\begin{align}
\label{preslikavanje}
\mathscr{H}_{L_1} ^{r',r_1} (O) \times \mathscr{H}_{L_2} ^{r'',r_2} (U) \ni (u,v) \mapsto u\otimes v \in \mathscr{H}_{L} ^{s,r} (O\times U),
\end{align}
where
\begin{align}
\label{Uslovi_na_indeks1}
L &= (L_1 \times L_2) \cup (L_1 \times (U\times \{0\})) \cup ((O\times \{0\})\times L_2),\\
\label{Uslovi_na_indeks2}
r &\leq r_1 + \min \{0,r''\}, \quad r \leq r_2 + \min \{0,r'\},\\
\label{Uslovi_na_indeks3}
s &\leq r' + \min \{0,r'' \}, \quad s \leq r'' + \min \{0,r' \}.
\end{align}
%by Lemma \ref{WF_tenz_proiz}. If we choose $r$ to be $r=\min \{ r_1 + \min \{0,r''\} , r_2 + \min \{0,r'\} \}$, then by \eqref{Uslovi_na_indeks2} and \eqref{Uslovi_na_indeks3}, it follows $r>s$. This means that our assumption in Remark \ref{Odnos_indeksa} is satisfied.
The fact that $u\otimes v \in H_{loc} ^s (O\times U)$ follows from the following lemma.

\begin{lemma}
\label{LemaHlocs}
If $u\in H_{loc} ^{r'} (O) $ and $v\in H_{loc} ^{r''} (U) $ then $u\otimes v \in H_{loc} ^{s} (O \times U)$ for all $s\in \mathbb{R}$ such that $s \leq r' + \min \{0,r'' \},  s \leq r'' + \min \{0,r' \}$. Moreover, the bilinear mapping
$$H_{loc} ^{r'} (O) \times H_{loc} ^{r''} (U) \ni (u,v) \mapsto u\otimes v \in H_{loc} ^{s} (O\times U)$$
is continuous.
\end{lemma}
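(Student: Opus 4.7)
The plan is to reduce the local statement to a global estimate on $\mathbb{R}^{m+n}$ via cutoff localization, and to derive the global estimate from Plancherel together with a pointwise weight comparison.

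First I would establish the key pointwise inequality: under the hypotheses $s\le r'+\min\{0,r''\}$ and $s\le r''+\min\{0,r'\}$ there exists $C>0$ such that
\[
\langle(\xi,\eta)\rangle^{2s} \le C\,\langle\xi\rangle^{2r'}\langle\eta\rangle^{2r''}, \qquad (\xi,\eta)\in\mathbb{R}^{m+n}.
\]
This reduces to a short case analysis on the signs of $r'$ and $r''$, using only the elementary inequalities $\max(\langle\xi\rangle,\langle\eta\rangle)\le \langle(\xi,\eta)\rangle \le \langle\xi\rangle\langle\eta\rangle$ together with the quadratic lower bound $\langle\xi\rangle\langle\eta\rangle\le \langle(\xi,\eta)\rangle^2$, which follows from $(1+|\xi|^2+|\eta|^2)^2 \ge (1+|\xi|^2)(1+|\eta|^2)$.

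Second, for compactly supported data $u\in H^{r'}(\mathbb{R}^m)\cap \mathcal{E}'(\mathbb{R}^m)$ and $v\in H^{r''}(\mathbb{R}^n)\cap \mathcal{E}'(\mathbb{R}^n)$, the identity $\mathcal{F}(u\otimes v)(\xi,\eta)=\widehat{u}(\xi)\widehat{v}(\eta)$, Plancherel, and the pointwise inequality above give at once
\[
\|u\otimes v\|_{H^s(\mathbb{R}^{m+n})}^2 = \int \langle(\xi,\eta)\rangle^{2s}|\widehat{u}(\xi)|^2|\widehat{v}(\eta)|^2\,\mathrm{d}\xi\,\mathrm{d}\eta \le C\|u\|_{H^{r'}}^2\|v\|_{H^{r''}}^2.
\]
For the localization, given any $\phi\in\mathcal{D}(O\times U)$ I pick $\varphi\in\mathcal{D}(O)$ and $\psi\in\mathcal{D}(U)$ such that $\varphi\otimes\psi\equiv 1$ on $\supp\phi$, so that $\phi(u\otimes v)=\phi\cdot((\varphi u)\otimes(\psi v))$. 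Since $\varphi u \in H^{r'}(\mathbb{R}^m)\cap\mathcal{E}'$, $\psi v\in H^{r''}(\mathbb{R}^n)\cap\mathcal{E}'$, and multiplication by $\phi\in\mathcal{D}(\mathbb{R}^{m+n})$ is a bounded operator on $H^s(\mathbb{R}^{m+n})$, the global estimate yields
\[
\|\phi(u\otimes v)\|_{H^s(\mathbb{R}^{m+n})} \le C_\phi \|\varphi u\|_{H^{r'}} \|\psi v\|_{H^{r''}}.
\]
This shows $u\otimes v\in H^s_{loc}(O\times U)$ and, by bounding each defining seminorm of the target Fr\'echet space by a product of defining seminorms of the two factors, gives joint continuity of the bilinear map.

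The main obstacle is the pointwise weight inequality in the case $r',r''<0$, where the assumption collapses to $s\le r'+r''<0$. In this regime the single estimate $\langle(\xi,\eta)\rangle\ge \max(\langle\xi\rangle,\langle\eta\rangle)$ absorbs only one of the positive powers $\langle\xi\rangle^{2|r'|},\langle\eta\rangle^{2|r''|}$ on the right; the quadratic bound $\langle\xi\rangle\langle\eta\rangle\le\langle(\xi,\eta)\rangle^2$ is what is needed to handle both simultaneously, and it is precisely this bound that makes the sum $r'+r''$, rather than the minimum, appear in the hypothesis.
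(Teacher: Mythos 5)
Your proposal is correct and follows essentially the same route as the paper: localize with a product cutoff $\varphi\otimes\psi$ equal to $1$ near the support of the test function, absorb the remaining cutoff by the standard $H^s$-boundedness of multiplication by a test function (which the paper proves explicitly via Peetre's inequality and Young's inequality), and reduce everything to the pointwise weight estimate $\langle(\xi,\eta)\rangle^{s}\leq C\langle\xi\rangle^{r'}\langle\eta\rangle^{r''}$ obtained by the same case analysis on the signs of $r',r''$. The only cosmetic difference is that you package that estimate as a single preliminary inequality rather than carrying out the four cases inside the seminorm bounds.
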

\begin{proof}
Let $s \leq r' + \min \{0,r'' \},  s \leq r'' + \min \{0,r' \}$ and $\chi\in\mathcal{D}(O\times U)$. Pick $\varphi \in \mathcal{D} (O)$ and $\psi\in \mathcal{D} (U)$ such that $\varphi\otimes \psi=1$ on a neighborhood of $\supp\chi$. Employing $\langle(\xi,\eta)\rangle^s\leq 2^{|s|}\langle (\xi-\xi',\eta-\eta')\rangle^s\langle(\xi',\eta')\rangle^{|s|}$ together with Young's inequality we infer
\begin{align}
q_{s;\chi}(u\otimes v)&=(2\pi)^{-n-m}\|\langle\cdot\rangle^s\mathcal{F}\chi*\mathcal{F}(\varphi u\otimes \psi v)\|_{L^2(\mathbb{R}^{n+m})}\nonumber\\
&\leq 2^{|s|}(2\pi)^{-n-m}\|\langle\cdot\rangle^{|s|}\mathcal{F}\chi\|_{L^1(\mathbb{R}^{n+m})}\|\langle\cdot\rangle^s\mathcal{F}(\varphi u\otimes \psi v)\|_{L^2(\mathbb{R}^{n+m})}.\label{bound-forone-t}
\end{align}
Consequently, we need to bound $q_{s; \varphi \otimes \psi} (u\otimes v)$. We discuss all possible signs of $r',r''$.
\begin{itemize}
\item \underline{$r',r''\geq 0$:}

Since $s \leq r' ,  s \leq r'' $, and $\langle (\xi , \eta) \rangle^s \leq \langle \xi \rangle^{r'} \langle \eta \rangle^{r''}$, we immediately deduce that
\begin{equation}\label{1_nejedn}
q_{s; \varphi \otimes \psi} ^2 (u\otimes v) 
= \iint_{\mathbb{R}^{m+n}} \langle (\xi , \eta) \rangle^{2s} | \widehat{\varphi u} (\xi) |^2 |\widehat{\psi v} (\eta)|^2 \mathrm{d}\xi \mathrm{d}\eta\leq q_{r'; \varphi } ^2 (u)  \ q_{r''; \psi} ^2 (v). 
\end{equation}

\item \underline{$r'<0 \leq r''$:}

This means that $s\leq r'$ and $s\leq r'+r''$, that is $s\leq r'<0$. Therefore, $\langle \xi \rangle^{|r'|} \leq \langle (\xi,\eta) \rangle^{|s|}$, and hence $\langle (\xi , \eta) \rangle^{s}\leq \langle \xi \rangle^{r'} \langle \eta \rangle^{r''}$. Consequently,
\begin{equation}\label{2_nejedn}
q_{s; \varphi \otimes \psi} ^2 (u\otimes v) 
\leq \iint_{\mathbb{R}^{2m}} \langle \xi \rangle^{2r'} \langle \eta \rangle^{2r''} | \widehat{\varphi u} (\xi) |^2 |\widehat{\psi v} (\eta)|^2 \mathrm{d}\xi \mathrm{d}\eta = q_{r'; \varphi } ^2 (u) \ q_{r''; \psi} ^2 (v).
\end{equation}

\item \underline{$r''<0 \leq r'$:}

This case is analogous to the previous one, that is
\begin{align}
\label{3_nejedn}
q_{s; \varphi \otimes \psi} ^2 (u\otimes v)
\leq q_{r'; \varphi } ^2 (u) \ q_{r''; \psi} ^2 (v)
\end{align}

\item \underline{$r',r''<0 $:}

In this case we have $s\leq r'+r''<0$, which implies
\begin{align*}
\langle (\xi , \eta) \rangle^{s}
\leq \langle (\xi , \eta) \rangle^{r'} \langle (\xi , \eta) \rangle^{r''}
\leq \langle \xi \rangle^{r'} \langle \eta \rangle^{r''}.
\end{align*}
Hence,
\begin{align}
\label{4_nejedn}
q_{s; \varphi \otimes \psi} ^2 (u\otimes v)
&\leq q_{r'; \varphi } ^2 (u) \ q_{r''; \psi} ^2 (v).
\end{align}
\end{itemize}
The continuity of the bilinear mapping follows from \eqref{bound-forone-t}, \eqref{1_nejedn}, \eqref{2_nejedn}, \eqref{3_nejedn}, \eqref{4_nejedn}.
\end{proof}

\iffalse
A subset $\mathcal{B}\subset \mathscr{H}_L ^{s,r} (O)$ is bounded if there exist constants $c_{\mathcal{B}} , c_{\mathcal{B}} ' , c_{\mathcal{B}} ''>0$ such that the seminorms in \eqref{seminorme} of distributions from $\mathcal{B}$ are bounded by these constants, that is,
\begin{align*}
p_{r;\varphi,V} (u)
&= \left( \int_V \langle \xi \rangle^{2r} |\mathcal{F} (\varphi u) (\xi)|^2 \mathrm{d}\xi \right)^{\frac{1}{2}} \leq c_{\mathcal{B}},\\
q_{s;\psi} (u)
&= \left( \int_{\mathbb{R}^m} \langle \xi \rangle^{2s} |\mathcal{F} (\psi u) (\xi)|^2 \mathrm{d}\xi \right)^{\frac{1}{2}} \leq c_{\mathcal{B}} ',
\end{align*}
for all $u\in {\mathcal{B}}$.
\fi

We can now show the continuity of \eqref{preslikavanje}.

\begin{theorem}
\label{hipo_tenz_proizv}
    Let $O\subseteq \mathbb{R}^m$ and $U\subseteq \mathbb{R}^n$ be open sets. The bilinear mapping \eqref{preslikavanje}, under the conditions \eqref{Uslovi_na_indeks1}, \eqref{Uslovi_na_indeks2}, \eqref{Uslovi_na_indeks3}, is well-defined and continuous.
\end{theorem}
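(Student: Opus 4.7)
Well-definedness is already in place: by Lemma~\ref{LemaHlocs} we have $u\otimes v\in H^s_{loc}(O\times U)$ together with a continuous tensor-product estimate, and by Lemma~\ref{WF_tenz_proiz} (whose Sobolev hypotheses are exactly \eqref{Uslovi_na_indeks2}) we have $WF^r(u\otimes v)\subseteq L$. Continuity therefore reduces to bounding each defining seminorm of $\mathscr{H}^{s,r}_L(O\times U)$ by continuous seminorms on the product space. The seminorms $q_{s;\chi}(u\otimes v)$ are handled directly by Lemma~\ref{LemaHlocs}, so the entire remaining task is to control $p_{r;\chi,W}(u\otimes v)$ for an arbitrary $\chi\in\mathcal{D}(O\times U)$ and closed cone $W\subseteq\mathbb{R}^{m+n}$ with $(\supp\chi\times W)\cap L=\emptyset$.

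My plan is a two-step reduction. First, I would pass to a tensor-product cutoff: pick $\varphi\in\mathcal{D}(O),\psi\in\mathcal{D}(U)$ with $\varphi\otimes\psi\equiv 1$ on a neighborhood of $\supp\chi$, and a closed conic neighborhood $\widetilde{W}$ of $W$ with $W\subseteq\operatorname{int}\widetilde{W}$ and $(\supp\varphi\times\supp\psi\times\widetilde{W})\cap L=\emptyset$. Since $\chi(u\otimes v)=\chi\cdot(\varphi u\otimes\psi v)$, its Fourier transform equals $(2\pi)^{-(m+n)}\mathcal{F}\chi*\bigl(\mathcal{F}(\varphi u)\otimes\mathcal{F}(\psi v)\bigr)$. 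Splitting the convolution integrand according to whether the integration variable lies in $\widetilde{W}$: the $\widetilde{W}$-piece is dominated---via the elementary inequality $\langle\zeta\rangle^r\leq 2^{|r|}\langle\zeta-\zeta'\rangle^{|r|}\langle\zeta'\rangle^r$ and Young's convolution inequality---by a constant multiple of $p_{r;\varphi\otimes\psi,\widetilde{W}}(u\otimes v)$; the complementary piece is handled by the angular separation $|\zeta-\zeta'|\geq c(|\zeta|+|\zeta'|)$ for $\zeta\in W$ and $\zeta'\notin\widetilde{W}$ (outside a compact neighborhood of the origin), which together with the Schwartz decay of $\mathcal{F}\chi$, the bound $\langle(\xi',\eta')\rangle^2\geq\langle\xi'\rangle\langle\eta'\rangle$, and Cauchy--Schwarz yields a bound of the form $C\,q_{r';\varphi}(u)\,q_{r'';\psi}(v)$.

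Second, I would bound $p_{r;\varphi\otimes\psi,\widetilde{W}}(u\otimes v)=\|\langle\cdot\rangle^r(\mathcal{F}(\varphi u)\otimes\mathcal{F}(\psi v))\|_{L^2(\widetilde{W})}$ by a cone decomposition. Using compactness of $\widetilde{W}\cap S^{m+n-1}$ together with the disjointness $(\supp\varphi\times\supp\psi\times\widetilde{W})\cap L=\emptyset$ and the specific shape of $L$ in \eqref{Uslovi_na_indeks1}, I would cover $\widetilde{W}$ by finitely many closed subcones of the three shapes arising in the proof of Lemma~\ref{WF_tenz_proiz}: pieces of the form $\{(\xi,\eta):\eta\in V_2,\,|\xi|<R|\eta|\}$ or $\{(\xi,\eta):\xi\in V_1,\,|\eta|<R|\xi|\}$, together with possible bi-conic pieces $V_1\times V_2$, where $V_1,V_2$ are closed cones satisfying $(\supp\varphi\times V_1)\cap L_1=\emptyset$ or $(\supp\psi\times V_2)\cap L_2=\emptyset$. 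On each such piece, the pointwise inequalities for $\langle(\xi,\eta)\rangle^r$ already carried out in the proof of Lemma~\ref{WF_tenz_proiz} (case-split on the signs of $r',r''$ using \eqref{Uslovi_na_indeks2}) dominate the contribution by a product of one of $\{p_{r_1;\varphi,V_1}(u),\,q_{r';\varphi}(u)\}$ with one of $\{p_{r_2;\psi,V_2}(v),\,q_{r'';\psi}(v)\}$---all continuous seminorms on the respective factor spaces.

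The main obstacle will be the cone decomposition step: producing the finite family of closed cones $V_1,V_2$ with the correct disjointness from $L_1,L_2$ uniformly over a cover of $\widetilde{W}$, and matching each subcone to the right elementary estimate $\langle(\xi,\eta)\rangle^r\leq c\langle\xi\rangle^{\alpha}\langle\eta\rangle^{\beta}$ consistent with the signs of $r'$ and $r''$. The underlying geometry is routine by compactness of $\widetilde{W}\cap S^{m+n-1}$ and closedness of $L_1,L_2$, but the bookkeeping across the four sign configurations of $(r',r'')$ needs to be organized carefully so that each emerging seminorm lands cleanly on the correct factor of the product space.
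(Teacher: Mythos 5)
Your overall architecture (Lemma~\ref{LemaHlocs} plus Lemma~\ref{WF_tenz_proiz} for well-definedness, then a convolution splitting for $p_{r;\chi,W}$ followed by a frequency-cone decomposition with a sign case analysis on $r',r''$) is the right one, but the very first reduction contains a genuine gap that the rest of the argument inherits. You ask for a \emph{single} tensor cutoff $\varphi\otimes\psi\equiv 1$ near $\supp\chi$ together with a conic neighborhood $\widetilde{W}$ of $W$ such that $(\supp\varphi\times\supp\psi\times\widetilde{W})\cap L=\emptyset$. Such a choice need not exist: $\supp\varphi\times\supp\psi$ necessarily contains the ``rectangular hull'' of $\supp\chi$, and the hypothesis $(\supp\chi\times W)\cap L=\emptyset$ says nothing about $L$ over hull points outside $\supp\chi$. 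Concretely, with $m=n=1$ let $\chi$ be supported near the two points $(0,1)$ and $(1,0)$, let $L_1$ have fiber $\{\xi>0\}$ over $x=0$ and empty fibers near $x=1$, and symmetrically for $L_2$; then one may take $W$ a closed cone around the direction $(1,1)$ and $(\supp\chi\times W)\cap L=\emptyset$, yet every admissible $\varphi,\psi$ must contain $0$ in their supports, so the fiber of $L_1\times L_2\subseteq L$ over $(0,0)\in\supp\varphi\times\supp\psi$ meets $\widetilde{W}$. In this situation the intermediate quantity $p_{r;\varphi\otimes\psi,\widetilde{W}}(u\otimes v)$ need not even be finite (take $u,v$ each Sobolev-singular at $0$ in the direction $+1$), and your second step collapses as well: for directions like $(1,1)$ neither dichotomy branch ``$(\supp\varphi\times V_1)\cap L_1=\emptyset$ or $(\supp\psi\times V_2)\cap L_2=\emptyset$'' is available, so no cover of $\widetilde{W}$ by the three admissible cone shapes exists.

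The missing idea is localization in the base before passing to tensor cutoffs, which is exactly what the paper does via a Brouder-type covering lemma (the Claim, modeled on \cite[Lemma 4.3]{Brouder}): one covers $\supp\chi$ by finitely many \emph{small} products $O_j\times U_j$, with associated closed cones $W_{j,1},W_{j,2}$ satisfying $(\overline{O'_j}\times(\mathbb{R}^m\backslash W_{j,1}))\cap L_1=\emptyset$, $(\overline{U'_j}\times(\mathbb{R}^n\backslash W_{j,2}))\cap L_2=\emptyset$ and $((W_{j,1}\cup\{0\})\times(W_{j,2}\cup\{0\}))\cap(\widetilde{V}\backslash\{0\})=\emptyset$, chooses $\varphi_{j,1}\otimes\varphi_{j,2}$ adapted to these rectangles, sets $\phi=\sum_j\varphi_{j,1}\otimes\varphi_{j,2}$ and writes $\chi=(\chi/\phi)\phi$, and only then performs the convolution splitting and the frequency decomposition (via homogeneous cutoffs $\alpha_j,\beta_j$) on each local piece, where the needed conic disjointness genuinely holds. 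With that covering step inserted, your subsequent estimates (the $\langle\zeta\rangle^r\leq 2^{|r|}\langle\zeta-\zeta'\rangle^{|r|}\langle\zeta'\rangle^r$ trick with Young/Minkowski, the small-frequency remainder absorbed into $q$-type seminorms, and the four-way sign analysis producing products of $p_{r_1},p_{r_2},q_{r'},q_{r''}$ seminorms) do go through and coincide with the paper's proof; without it, the argument as written fails already at the choice of $\varphi,\psi,\widetilde{W}$.
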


\begin{proof}
When $L=O\times U\times(\mathbb{R}^{m+n}\backslash\{0\})$, the claim follows from Remark \ref{Odnos_indeksa} and Lemma \ref{LemaHlocs}. Assume now that $(O\times U\times(\mathbb{R}^{m+n}\backslash\{0\}))\backslash L\neq \emptyset$. Lemma \ref{LemaHlocs} together with Lemma \ref{WF_tenz_proiz} verify that \eqref{preslikavanje} is well-defined. To show that it is continuous, in view of Lemma \ref{LemaHlocs}, it remains to estimate $p_{r;\chi,V} (u\otimes v)$, $u\in \mathscr{H}_{L_1} ^{r',r_1} (O)$, $v\in\mathscr{H}_{L_2} ^{r'',r_2} (U)$, where $\chi\in\mathcal{D}(O\times U)\backslash\{0\}$ and the closed cone $V\subseteq \mathbb{R}^{m+n}$ satisfy $V\backslash\{0\}\neq \emptyset$ and $(\supp\chi\times V)\cap L=\emptyset$. Employing a standard compactness argument, we can find a compact set $K\subseteq O\times U$ and a closed cone $\widetilde{V}\subseteq \mathbb{R}^{m+n}$ such that $\supp\chi\subseteq \operatorname{int}K$, $V\backslash\{0\}\subseteq \operatorname{int}\widetilde{V}$ and $(K\times \widetilde{V})\cap L=\emptyset$. By employing the same technique as in the proof of \cite[Lemma 4.3]{Brouder}, one can show the following\\
\\
\noindent\textbf{Claim} There are open sets $O_j\subseteq O'_j\subseteq O$ and $U_j\subseteq U'_j\subseteq U$, $j=1,\ldots,l$, and closed cones $W_{j,1}\subseteq \mathbb{R}^m\backslash\{0\}$ and $W_{j,2}\subseteq\mathbb{R}^n\backslash\{0\}$, $j=1,\ldots,l$, which satisfy the following properties
\begin{itemize}
    \item[$(i)$] $\overline{O'_j}\times \overline{U'_j}\subseteq \operatorname{int}K$, $\overline{O_j}\subseteq O'_j$, $\overline{U_j}\subseteq U'_j$ and $\supp\chi\subseteq \bigcup_{j=1}^l(O_j\times U_j)$;
    \item[$(ii)$] $(\overline{O'_j}\times (\mathbb{R}^m\backslash W_{j,1}))\cap L_1=\emptyset$ and $(\overline{U'_j}\times(\mathbb{R}^n\backslash W_{j,2}))\cap L_2=\emptyset$;
    \item[$(iii)$] $\left((W_{j,1}\cup\{0\})\times (W_{j,2}\cup\{0\})\right)\cap (\widetilde{V}\backslash\{0\})=\emptyset$.
\end{itemize}
\vspace{10 pt}
\noindent Of course, $(\overline{O'_j}\times \overline{U'_j}\times \widetilde{V})\cap L=\emptyset$, $j=1,\ldots,l$. Pick nonnegative $\varphi_{j,1}\in\mathcal{D}(O'_j)$ and $\varphi_{j,2}\in\mathcal{D}(U'_j)$ such that $\varphi_{j,1}=1$ on $O_j$ and $\varphi_{j,2}=1$ on $U_j$. Set $\phi:=\sum_{j=1}^l \varphi_{j,1}\otimes\varphi_{j,2}$. Notice that
\begin{align*}
p_{r; \chi , V} (u\otimes v)&=\left(\iint_V \langle(\xi,\eta)\rangle^{2r}\left|\mathcal{F}\left((\chi/\phi)\sum_{j=1}^l\varphi_{j,1}u\otimes \varphi_{j,2}v\right)(\xi,\eta)\right|^2\mathrm{d}\xi \mathrm{d}\eta\right)^{\frac{1}{2}}\leq \sum_{j=1}^l I_j,
\end{align*}
where
\begin{equation*}
I_j:=\left(\iint_V \langle(\xi,\eta)\rangle^{2r}\left|\mathcal{F}\left((\chi/\phi)(\varphi_{j,1}u\otimes\varphi_{j,2}v)\right)(\xi,\eta)\right|^2\mathrm{d}\xi \mathrm{d}\eta\right)^{\frac{1}{2}},\quad j=1,\ldots, l.
\end{equation*}
There is $0<c<1$ such that
\begin{equation}\label{sub-for-cha-offc}
\{(\xi,\eta)\in\mathbb{R}^{m+n}\,|\, \exists(\xi',\eta')\in V\,\, \mbox{such that}\,\,|(\xi,\eta)-(\xi',\eta')|< c|(\xi',\eta')|\}\subseteq\operatorname{int} \widetilde{V}.
\end{equation}
We employ Minkowski integral inequality to infer
\begin{align*}
I_j&\leq \left(\iint_V \langle(\xi,\eta)\rangle^{2r}\left(\iint_{\mathbb{R}^{m+n}}|\mathcal{F}(\chi/\phi)(\xi',\eta')||\mathcal{F}(\varphi_{j,1}u\otimes\varphi_{j,2}v)(\xi-\xi',\eta-\eta')|\mathrm{d}\xi' \mathrm{d}\eta'\right)^2\mathrm{d}\xi \mathrm{d}\eta\right)^{\frac{1}{2}}\\
&\leq I'_j+I''_j,
\end{align*}
where
\begin{align*}
I'_j&:=2^{|r|}\iint_{\mathbb{R}^{m+n}} \langle(\xi',\eta')\rangle^{|r|}|\mathcal{F}(\chi/\phi)(\xi',\eta')|\\
&\cdot\left(\iint_{\substack{(\xi,\eta)\in V\\ |(\xi,\eta)|> |(\xi',\eta')|/c}}|\mathcal{F}(\varphi_{j,1}u\otimes\varphi_{j,2}v)(\xi-\xi',\eta-\eta')|^2\langle(\xi-\xi',\eta-\eta')\rangle^{2r}\mathrm{d}\xi \mathrm{d}\eta\right)^{\frac{1}{2}}\mathrm{d}\xi' \mathrm{d}\eta',\\
I''_j&:=2^{|r|}\iint_{\mathbb{R}^{m+n}} \langle(\xi',\eta')\rangle^{|r|}|\mathcal{F}(\chi/\phi)(\xi',\eta')|\\
&\cdot\left(\iint_{\substack{(\xi,\eta)\in V\\ |(\xi,\eta)|\leq |(\xi',\eta')|/c}}|\mathcal{F}(\varphi_{j,1}u\otimes\varphi_{j,2}v)(\xi-\xi',\eta-\eta')|^2\langle(\xi-\xi',\eta-\eta')\rangle^{2r}\mathrm{d}\xi \mathrm{d}\eta\right)^{\frac{1}{2}}\mathrm{d}\xi' \mathrm{d}\eta'.
\end{align*}
A change of variables in $I'_j$ together with \eqref{sub-for-cha-offc} gives
\begin{align*}
I'_j&=2^{|r|}\iint_{\mathbb{R}^{m+n}} \langle(\xi',\eta')\rangle^{|r|}|\mathcal{F}(\chi/\phi)(\xi',\eta')|\\
&{}\quad\cdot\left(\iint_{\substack{(\xi,\eta)\in V-(\xi',\eta')\\ |(\xi+\xi',\eta+\eta')|\geq |(\xi',\eta')|/c}}|\mathcal{F}(\varphi_{j,1}u\otimes\varphi_{j,2}v)(\xi,\eta)|^2\langle(\xi,\eta)\rangle^{2r}\mathrm{d}\xi \mathrm{d}\eta\right)^{\frac{1}{2}}\mathrm{d}\xi' \mathrm{d}\eta'\\
&\leq 2^{|r|}\|\langle\cdot\rangle^{|r|}\mathcal{F}(\chi/\phi)\|_{L^1(\mathbb{R}^{m+n})}p_{r;\varphi_{j,1}\otimes\varphi_{j,2},\widetilde{V}}(u\otimes v).
\end{align*}
To estimate $I''_j$, notice that on the domain of integration of the inner integral the following inequality holds true
\begin{equation*}
\langle(\xi-\xi',\eta-\eta')\rangle^{2r}\leq C''\langle(\xi-\xi',\eta-\eta')\rangle^{2s}\langle(\xi',\eta')\rangle^{4|r|+4|s|}.
\end{equation*}
Consequently, $I''_j\leq 2^{|r|}\sqrt{C''}\|\langle\cdot\rangle^{3|r|+2|s|}\mathcal{F}(\chi/\phi)\|_{L^1(\mathbb{R}^{m+n})}q_{s;\varphi_{j,1}\otimes\varphi_{j,2}}(u\otimes v)$ and we deduce
\begin{equation*}
p_{r; \chi , V} (u\otimes v)\leq C \sum_{j=1}^l(p_{r;\varphi_{j,1}\otimes\varphi_{j,2},\widetilde{V}}(u\otimes v)+q_{s;\varphi_{j,1}\otimes\varphi_{j,2}}(u\otimes v)).
\end{equation*}
Lemma \ref{LemaHlocs} implies that $q_{s;\varphi_{j,1}\otimes\varphi_{j,2}}(u\otimes v)$ is bounded by a product of a continuous seminorm on $H_{loc} ^{r'} (O)$ of $u$ and a continuous seminorm on $H_{loc} ^{r''} (U)$ of $v$. It remains to bound the seminorms $p_{r;\varphi_{j,1}\otimes\varphi_{j,2},\widetilde{V}}(u\otimes v)$, $j=1,\ldots,l$.\\
\indent For each $j=1,\ldots,l$, we claim that there exist closed cones $W'_{j,1}\subseteq \mathbb{R}^m\backslash\{0\}$ and $W'_{j,2}\subseteq \mathbb{R}^n\backslash\{0\}$ such that $W_{j,1}\subseteq \operatorname{int}W'_{j,1}$ and $W_{j,2}\subseteq \operatorname{int}W'_{j,2}$ and they satisfy
\begin{itemize}
\item[$(ii)'$] $(\overline{O'_j}\times (\mathbb{R}^m\backslash W'_{j,1}))\cap L_1=\emptyset$ and $(\overline{U'_j}\times(\mathbb{R}^n\backslash W_{j',2}))\cap L_2=\emptyset$;
\item[$(iii)'$] $((W'_{j,1}\cup\{0\})\times(W'_{j,2}\cup\{0\}))\cap(\widetilde{V}\backslash\{0\})=\emptyset$.
\end{itemize}
Notice that $(ii)'$ immediately follows from the property $(ii)$ in the Claim. To show $(iii)'$ we proceed as follows. For $k\in\mathbb{N}$ and $\lambda>0$, denote by $B_k(\lambda)$ the open ball in $\mathbb{R}^k$ with center at the origin and radius $\lambda$. There is $0<\varepsilon<1$ such that
\begin{equation}
\left(((W_{j,1}\cup\{0\})\times(W_{j,2}\cup\{0\}))\cap\mathbb{S}^{m+n}+\overline{B_{m+n}(\varepsilon)}\right)\cap\widetilde{V}=\emptyset.\label{conditions-on-setcfortk}
\end{equation}
We claim that
\begin{equation*}
W'_{j,1}:=\bigcup_{\lambda>0}\lambda\left(W_{j,1}\cap\mathbb{S}^m+\overline{B_m(\varepsilon)}\right)\quad \mbox{and}\quad W'_{j,2}:=\bigcup_{\lambda>0}\lambda\left(W_{j,2}\cap\mathbb{S}^n+\overline{B_n(\varepsilon)}\right)
\end{equation*}
satisfy $(iii)'$. Clearly $W'_{j,1}$ and $W'_{j,2}$ are closed cones in $\mathbb{R}^m\backslash\{0\}$ and $\mathbb{R}^n\backslash\{0\}$ respectively and satisfy $W_{j,1}\subseteq \operatorname{int}W'_{j,1}$ and $W_{j,2}\subseteq \operatorname{int}W'_{j,2}$. Assume that there is $(\xi_0,\eta_0)$ in the set in $(iii)'$. If $\xi_0=0$ then $\eta_0\in W'_{j,2}$ and consequently $\eta_0=\lambda(\eta+\eta')$ for some $\lambda>0$, $\eta\in W_{j,2}\cap\mathbb{S}^n$ and $\eta'\in\overline{B_n(\varepsilon)}$. But then $(0,\eta)+(0,\eta')=\lambda^{-1}(0,\eta_0)\in \widetilde{V}$ and $(0,\eta)+(0,\eta')\in ((W_{j,1}\cup\{0\})\times(W_{j,2}\cup\{0\}))\cap\mathbb{S}^{m+n}+\overline{B_{m+n}(\varepsilon)}$ which contradicts \eqref{conditions-on-setcfortk}. The case $\eta_0=0$ can be treated analogously. Assume now that both $\xi_0\neq0$ and $\eta_0\neq0$. Hence $\xi_0\in W'_{j,1}$ and $\eta_0\in W'_{j,2}$, and thus $\xi_0=\lambda(\xi+\xi')$ and $\eta_0=\mu(\eta+\eta')$ for some $\lambda,\mu>0$, $\xi\in W_{j,1}\cap\mathbb{S}^m$, $\eta\in W_{j,2}\cap\mathbb{S}^n$, $\xi'\in\overline{B_m(\varepsilon)}$ and $\eta'\in\overline{B_n(\varepsilon)}$. Notice that
\begin{equation*}
(1+\mu^2/\lambda^2)^{-1/2}(\xi,(\mu/\lambda)\eta)+(1+\mu^2/\lambda^2)^{-1/2}(\xi',(\mu/\lambda)\eta')=\lambda^{-1}(1+\mu^2/\lambda^2)^{-1/2}(\xi_0,\eta_0)\in\widetilde{V}.
\end{equation*}
This contradicts \eqref{conditions-on-setcfortk} since
\begin{gather*}
(1+\mu^2/\lambda^2)^{-1/2}(\xi,(\mu/\lambda)\eta)\in((W_{j,1}\cup\{0\})\times(W_{j,2}\cup\{0\}))\cap\mathbb{S}^{m+n}\quad\mbox{and}\\
(1+\mu^2/\lambda^2)^{-1/2}|(\xi',(\mu/\lambda)\eta')|\leq \varepsilon
\end{gather*}
and the proof of $(iii)'$ is complete. The properties in the Claim together with $(ii)'$ and $(iii)'$ imply that, for each $j=1,\ldots,l$, there are smooth nonnegative positively homogeneous functions of order zero $\alpha_j$ and $\beta_j$ on $\mathbb{R}^m\backslash\{0\}$ and $\mathbb{R}^n\backslash\{0\}$ respectively such that both are bounded by $1$ and satisfy the following properties:
\begin{enumerate}
\item
$\alpha_j|_{W_{j,1}} = 1, \ \beta_j|_{W_{j,2}} = 1$,
\item
$((\supp \alpha_j\cup\{0\}) \times (\supp \beta_j\cup\{0\})) \cap (\widetilde{V}\backslash\{0\}) = \emptyset$,
\item
$(\supp\varphi_{j,1}\times \supp (1-\alpha_j)) \cap L_1 = \emptyset$,
\item
$(\supp\varphi_{j,2} \times \supp (1-\beta_j)) \cap L_2 = \emptyset$.
\end{enumerate}
This gives
\begin{align*}
\widehat{\varphi_{j,1} u} (\xi) \widehat{\varphi_{j,2} v} (\eta) &= \alpha_j (\xi) \widehat{\varphi_{j,1} u} (\xi) \, \beta_j (\eta) \widehat{\varphi_{j,2} v} (\eta)\\
& \quad + \alpha_j (\xi) \widehat{\varphi_{j,1} u} (\xi) \, (1 - \beta_j (\eta)) \widehat{\varphi_{j,2} v} (\eta)\\
& \quad + ( 1 - \alpha_j (\xi))  \widehat{\varphi_{j,1} u} (\xi) \, \beta_j (\eta) \widehat{\varphi_{j,2} v} (\eta) \\
& \quad +  ( 1 - \alpha_j (\xi))  \widehat{\varphi_{j,1} u} (\xi) \, (1 - \beta_j (\eta)) \widehat{\varphi_{j,2} v} (\eta),
\end{align*}
and therefore
\begin{align*}
p_{r; \varphi_{j,1} \otimes \varphi_{j,2} , \widetilde{V}} (u \otimes v)
&\leq I_{j,1} + I_{j,2} + I_{j,3} + I_{j,4},
\end{align*}
where
\begin{align*}
I_{j,1}
&= \left( \iint_{\widetilde{V}} \langle (\xi , \eta) \rangle^{2r} \, |\alpha_j (\xi) \widehat{\varphi_{j,1} u} (\xi) |^2 \, |\beta_j (\eta) \widehat{\varphi_{j,2} v} (\eta)|^2 \mathrm{d}\xi \mathrm{d}\eta \right)^{\frac{1}{2}} ,\\
I_{j,2}
&= \left( \iint_{\widetilde{V}} \langle (\xi , \eta) \rangle^{2r} \, |\alpha_j (\xi) \widehat{\varphi_{j,1} u} (\xi) |^2 \, |(1-\beta_j (\eta)) \widehat{\varphi_{j,2} v} (\eta)|^2 \mathrm{d}\xi \mathrm{d}\eta \right)^{\frac{1}{2}} ,\\
I_{j,3}
&= \left( \iint_{\widetilde{V}} \langle (\xi , \eta) \rangle^{2r} |(1-\alpha_j (\xi)) \widehat{\varphi_{j,1} u} (\xi) |^2 |\beta_j (\eta) \widehat{\varphi_{j,2} v} (\eta)|^2 \, \mathrm{d}\xi \mathrm{d}\eta \right)^{\frac{1}{2}} ,\\
I_{j,4}
&= \left( \iint_{\widetilde{V}} \langle (\xi , \eta) \rangle^{2r} \, |(1-\alpha_j (\xi)) \widehat{\varphi_{j,1} u} (\xi) |^2 \, |(1-\beta_j (\eta)) \widehat{\varphi_{j,2} v} (\eta)|^2 \mathrm{d}\xi \mathrm{d}\eta \right)^{\frac{1}{2}} .
\end{align*}
We estimate each term separately.
\begin{itemize}
\item[$\bullet$ \underline{$I_{j,1}$:}]
Since $(\supp \alpha_j \times \supp \beta_j) \cap \widetilde{V} = \emptyset$, we infer $I_{j,1} = 0$.
\item[$\bullet$ \underline{$I_{j,4}$:}]
Denote $C_{\alpha_j}:= \supp (1-\alpha_j) , C_{\beta_j}:= \supp (1-\beta_j)$; of course $C_{\alpha_j}$ and $C_{\beta_j}$ are closed cones in $\mathbb{R}^m\backslash\{0\}$ and $\mathbb{R}^n\backslash\{0\}$. Observe that
\begin{align*}
\widetilde{V}  \cap (C_{\alpha_j} \times C_{\beta_j})
& \subseteq C_{\alpha_j} \times C_{\beta_j} \\
&= \{ (\xi , \eta) \in C_{\alpha_j} \times C_{\beta_j} \mid |\xi| \leq |\eta| \} \cup \{ (\xi , \eta) \in C_{\alpha_j} \times C_{\beta_j} \mid |\xi| > |\eta| \}.
\end{align*}

Hence,
\begin{align*}
I_{j,4}
&= \left( \iint_{\widetilde{V}} \langle (\xi , \eta) \rangle^{2r} \, |(1-\alpha_j (\xi)) \widehat{\varphi_{j,1} u} (\xi) |^2 \, |(1-\beta_j (\eta)) \widehat{\varphi_{j,2} v} (\eta)|^2 \mathrm{d}\xi \mathrm{d}\eta \right)^{\frac{1}{2}} \\
&\leq \left( \iint_{\substack{ \xi \in C_{\alpha_j} \\ \eta \in C_{\beta_j} \\ |\xi| \leq |\eta| }} \langle (\xi , \eta) \rangle^{2r} \, |(1-\alpha_j (\xi)) \widehat{\varphi_{j,1} u} (\xi) |^2 \, |(1-\beta_j (\eta)) \widehat{\varphi_{j,2} v} (\eta)|^2 \mathrm{d}\xi \mathrm{d}\eta \right)^{\frac{1}{2}} \\
& \quad + \left( \iint_{\substack{ \xi \in C_{\alpha_j} \\ \eta \in C_{\beta_j} \\ |\xi| > |\eta| }} \langle (\xi , \eta) \rangle^{2r} \, |(1-\alpha_j (\xi)) \widehat{\varphi_{j,1} u} (\xi) |^2 \, |(1-\beta_j (\eta)) \widehat{\varphi_{j,2} v} (\eta)|^2 \mathrm{d}\xi \mathrm{d}\eta \right)^{\frac{1}{2}} \\
&\leq \left( \iint_{\substack{ \xi \in C_{\alpha_j} \\ \eta \in C_{\beta_j} \\ |\xi| \leq |\eta| }} \langle (\xi , \eta) \rangle^{2r} \, |\widehat{\varphi_{j,1} u} (\xi) |^2 \, | \widehat{\varphi_{j,2} v} (\eta)|^2 \mathrm{d}\xi \mathrm{d}\eta \right)^{\frac{1}{2}} \\
& \quad + \left( \iint_{\substack{ \xi \in C_{\alpha_j} \\ \eta \in C_{\beta_j} \\ |\xi| > |\eta| }} \langle (\xi , \eta) \rangle^{2r} \, | \widehat{\varphi_{j,1} u} (\xi) |^2 \, | \widehat{\varphi_{j,2} v} (\eta)|^2 \mathrm{d}\xi \mathrm{d}\eta \right)^{\frac{1}{2}}.
\end{align*}
Now we discuss all possible signs of $r',r''$.
\begin{itemize}
\item[a) \underline{$r',r''\geq 0$:}]
Then it holds $r\leq r_1 + \min \{0,r''\}=r_1 , r\leq r_2 + \min \{0,r'\}=r_2$. If $|\xi| \leq |\eta|$ then $\langle (\xi , \eta) \rangle^{r} \leq c_r \langle \eta \rangle^{r} \leq c_r \langle \eta \rangle^{r_2}$. On the other hand, if $|\xi| > |\eta|$ then $\langle (\xi , \eta) \rangle^{r} \leq c_r \langle \xi \rangle^{r_1}$. This gives us
\begin{align*}
I_{j,4}
& \leq c_r \left( \iint_{\substack{ \xi \in C_{\alpha_j} \\ \eta \in C_{\beta_j} \\ |\xi| \leq |\eta| }} \langle  \eta \rangle^{2r_2} \, |\widehat{\varphi_{j,1} u} (\xi) |^2 \, | \widehat{\varphi_{j,2} v} (\eta)|^2 \mathrm{d}\xi \mathrm{d}\eta \right)^{\frac{1}{2}} \\
& \quad + c_r \left( \iint_{\substack{ \xi \in C_{\alpha_j} \\ \eta \in C_{\beta_j} \\ |\xi| > |\eta| }} \langle \xi  \rangle^{2r_1} \, | \widehat{\varphi_{j,1} u} (\xi) |^2 \, | \widehat{\varphi_{j,2} v} (\eta)|^2 \mathrm{d}\xi \mathrm{d}\eta \right)^{\frac{1}{2}} \\
& \leq c_r \left( \lVert \widehat{\varphi_{j,1} u} \rVert_{L^2 (\mathbb{R}^m)} p_{r_2 ; \varphi_{j,2} , C_{\beta_j}} (v) + p_{r_1 ; \varphi_{j,1} , C_{\alpha_j}} (u) \lVert \widehat{\varphi_{j,2} v} \rVert_{L^2 (\mathbb{R}^n)} \right). 
%& \\ \leq c \left( p_{r_2 ; \psi_2 , C_{\beta}} (v) + \lVert \widehat{\psi_2 v} \rVert_{L^2 (\mathbb{R}^n))} \right),
\end{align*}
%since $u$ belongs in a bounded subset $\mathcal{A}$ of $\mathscr{H}_{L_1} ^{r',r_1} (O)$.
\item[b) \underline{$r'<0\leq r''$:}]
Then it holds $r\leq r_1$ and $r\leq r_2 + r'$. If $|\xi|\leq |\eta|$ then $\langle (\xi , \eta) \rangle^{r} \leq c_r \langle \eta \rangle^{r_2} \langle \xi \rangle^{r'}$. On the other hand, if $|\xi| > |\eta|$ then $\langle (\xi , \eta) \rangle^{r} \leq c_r \langle \xi \rangle^{r_1}$. This gives us
\begin{align*}
I_{j,4}
& \leq c_r \left( \iint_{\substack{ \xi \in C_{\alpha_j} \\ \eta \in C_{\beta_j} \\ |\xi| \leq |\eta| }} \langle  \xi \rangle^{2r'} \langle  \eta \rangle^{2r_2} \, |\widehat{\varphi_{j,1} u} (\xi) |^2 \, | \widehat{\varphi_{j,2} v} (\eta)|^2 \mathrm{d}\xi \mathrm{d}\eta \right)^{\frac{1}{2}} \\
& \quad + c_r \left( \iint_{\substack{ \xi \in C_{\alpha_j} \\ \eta \in C_{\beta_j} \\ |\xi| > |\eta| }} \langle \xi  \rangle^{2r_1} \, | \widehat{\varphi_{j,1} u} (\xi) |^2 \, | \widehat{\varphi_{j,2} v} (\eta)|^2 \mathrm{d}\xi \mathrm{d}\eta \right)^{\frac{1}{2}} \\
& \leq c_r (q_{r' ; \varphi_{j,1}} (u) p_{r_2;\varphi_{j,2} , C_{\beta_j}} (v) + p_{r_1 ; \varphi_{j,1} , C_{\alpha_j}} (u) \lVert \widehat{\varphi_{j,2} v} \rVert_{L^2 (\mathbb{R}^n)}) .
% \\ & \leq c ( p_{r_2;\psi_2 , C_{\beta}} (v) + \lVert \widehat{\psi_2 v} \rVert_{L^2 (\mathbb{R}^n)}),
\end{align*}
%since $u$ belongs in a bounded subset $\mathcal{A}$ of $\mathscr{H}_{L_1} ^{r',r_1} (O)$.
\item[c) \underline{$r''<0\leq r'$:}]
This case is analogous to the previous one. Precisely, from $r\leq r_1+r''$ and $r\leq r_2$, it follows $\langle (\xi , \eta) \rangle^r \leq c_r \langle \eta \rangle^{r_2}$ when $| \xi| \leq |\eta|$, and $\langle (\xi , \eta) \rangle^r \leq c_r \langle \xi \rangle^{r_1} \langle \eta \rangle^{r''}$ for $| \xi| > |\eta|$. Therefore,
\begin{align*}
I_{j,4}
\leq c_r (\|\widehat{\varphi_{j,1} u}\|_{L^2(\mathbb{R}^m)}p_{r_2;\varphi_{j,2} , C_{\beta_j}} (v) +  p_{r_1;\varphi_{j,1},C_{\alpha_j}}(u)q_{r'' ; \varphi_{j,2}} (v)).
\end{align*}
\item[d) \underline{$r',r''<0$:}]
Then it holds $r\leq r_1 + r''$ and $r\leq r_2 + r'$. If $|\xi|\leq |\eta|$ then $\langle (\xi , \eta ) \rangle^{r} \leq c_r \langle \eta \rangle^{r_2} \langle \xi \rangle^{r'}$. On the other hand, if $|\xi| > |\eta|$ then $\langle (\xi , \eta ) \rangle^{r} \leq c_r \langle \xi \rangle^{r_1} \langle \eta \rangle^{r''}$.
\begin{align*}
I_{j,4}
& \leq c_r \left( \iint_{\substack{ \xi \in C_{\alpha_j} \\ \eta \in C_{\beta_j} \\ |\xi| \leq |\eta| }} \langle  \xi \rangle^{2r'} \langle  \eta \rangle^{2r_2} \, |\widehat{\varphi_{j,1} u} (\xi) |^2 \, | \widehat{\varphi_{j,2} v} (\eta)|^2 \mathrm{d}\xi \mathrm{d}\eta \right)^{\frac{1}{2}} \\
& \quad + c_r \left( \iint_{\substack{ \xi \in C_{\alpha_j} \\ \eta \in C_{\beta_j} \\ |\xi| > |\eta| }} \langle  \xi \rangle^{2r_1} \langle  \eta \rangle^{2r''} \, |\widehat{\varphi_{j,1} u} (\xi) |^2 \, | \widehat{\varphi_{j,2} v} (\eta)|^2 \mathrm{d}\xi \mathrm{d}\eta \right)^{\frac{1}{2}} \\
& \leq c_r (q_{r' ; \varphi_{j,1}} (u) p_{r_2; \varphi_{j,2} , C_{\beta_j}} (v) + p_{r_1; \varphi_{j,1} , C_{\alpha_j}} (u) q_{r'' ; \varphi_{j,2}} (v)) .
% \\ & \leq c (p_{r_2; \psi_2 , C_{\beta}} (v) + q_{r'' , \psi_2} (v)),
\end{align*}
%since $u$ belongs in a bounded subset $\mathcal{A}$ of $\mathscr{H}_{L_1} ^{r',r_1} (O)$.
\end{itemize}

\item[$\bullet$ \underline{$I_{j,2}$:}]
The domain of integration in $I_{j,2}$ is $(\supp \alpha_j \times C_{\beta_j})\cap\widetilde{V}$. Notice that for each $(\xi,\eta)\in((\supp\alpha_j\cup\{0\})\times(C_{\beta_j}\cup\{0\}))\cap\widetilde{V}\cap\mathbb{S}^{m+n}$ it holds that $\eta\neq 0$. Indeed, if $(\xi,0)$ belongs to this set for some $\xi$, then $(\xi,0)$ would belong to the set in 2. which is a contradiction. Since $((\supp\alpha_j\cup\{0\})\times(C_{\beta_j}\cup\{0\}))\cap\widetilde{V}\cap\mathbb{S}^{m+n}$ is compact, there is $0<\varepsilon'<1$ such that for every $(\xi,\eta)$ in this set it holds that $|\eta|\geq \varepsilon'$. This implies that if $(\xi,\eta)\in ((\supp\alpha_j\cup\{0\})\times(C_{\beta_j}\cup\{0\}))\cap\widetilde{V}$ then $|\xi|\leq \varepsilon'^{-1}\sqrt{1-\varepsilon'^2}|\eta|$.
\begin{itemize}
\item[a) \underline{$r' \geq 0$:}]
We have $\langle (\xi,\eta)\rangle^r\leq c'\langle \eta\rangle^{r_2}$ and hence
\begin{align*}
I_{j,2}
&\leq c' \left( \iint_{ \substack{ \xi \in \supp \alpha_j \\ \eta \in C_{\beta_j} } } \langle \eta \rangle^{2r_2} \, |\alpha_j (\xi) \widehat{\varphi_{j,1} u} (\xi) |^2 \, |(1-\beta_j (\eta)) \widehat{\varphi_{j,2} v} (\eta)|^2 \mathrm{d}\xi \mathrm{d}\eta \right)^{\frac{1}{2}} \\
&\leq c' \lVert \widehat{\varphi_{j,1} u} \rVert_{L^2 (\mathbb{R}^m)} p_{r_2 ; \varphi_{j,2} , C_{\beta_j}} (v) .
% \\&\leq c p_{r_2 ; \psi_2 , C_{\beta}} (v).
\end{align*}
\item[b) \underline{$r' < 0$:}]
This means that $r\leq r_2 + r'$ and therefore, $\langle (\xi , \eta) \rangle^{r} \leq c' \langle \eta \rangle^{r_2} \langle \xi \rangle^{r'}$. Hence,
\begin{align*}
I_{j,2}
&\leq c' \left( \iint_{ \substack{ \xi \in \supp \alpha_j \\ \eta \in C_{\beta_j} } } \langle \eta \rangle^{2r_2} \langle \xi \rangle^{2r'} \, |\alpha_j (\xi) \widehat{\varphi_{j,1} u} (\xi) |^2 \, |(1-\beta_j (\eta)) \widehat{\varphi_{j,2} v} (\eta)|^2 \mathrm{d}\xi \mathrm{d}\eta \right)^{\frac{1}{2}} \\
&\leq c' q_{r'; \varphi_{j,1}} (u) p_{r_2 ; \varphi_{j,2} , C_{\beta_j}} (v) .
% \\ &\leq c p_{r_2 ; \psi_2 , C_{\beta}} (v).
\end{align*}
% since $u$ belongs in a bounded subset $\mathcal{A}$ of $\mathscr{H}_{L_1} ^{r',r_1} (O)$.
\end{itemize}
\item[$\bullet$ \underline{$I_{j,3}$:}]
Analogously as for $I_{j,2}$, one obtains $I_{j,3} \leq c  p_{r_1; \varphi_{j,1} , C_{\alpha_j}} (u) \lVert \widehat{\varphi_{j,2} v} \rVert_{L^2 (\mathbb{R}^{n})}$ when $r''\geq 0$ and $ I_{j,3} \leq c p_{r_1; \varphi_{j,1} , C_{\alpha_j}} (u) \, q_{r'' ; \varphi_{j,2}} (v)$ when $r'' < 0$.
\end{itemize}
The proof of the theorem is complete.
\end{proof}

\begin{remark}\label{rtksloh-kfvss}
The bilinear map $\mathcal{D}(O)\times \mathcal{D}(O)\rightarrow \mathcal{D}(O\times O)$, $(\psi,\phi)\mapsto \psi\phi$, uniquely extends to a continuous bilinear map $H_{loc} ^{r'} (O)\times H_{loc} ^{r''} (O)\rightarrow H_{loc} ^{s_*} (O)$, when $r'+r''\geq0$, $s_*\leq \min\{r',r''\}$ and $s_*\leq r'+r''-\frac{m}{2}$ with the last inequality being strict if $-s_*$ or $r'$ or $r''$ equals $m/2$. To verify this, let $u,v\in\mathcal{D}(O)$ and $\varphi\in\mathcal{D}(O)$. Pick $\varphi_1\in\mathcal{D}(O)$ such that $\varphi_1=1$ on a neighborhood of $\supp\varphi$. Repeating the same arguments verbatim as in the proof of \cite[Theorem 8.3.1, p. 189]{HNonlinear} (cf. \cite{beh-h,TJPT}) one can show that
\begin{equation*}
q_{s_*;\varphi}(uv)=\|\langle\cdot\rangle^{s_*}\mathcal{F}(\varphi u \varphi_1 v)\|_{L^2(\mathbb{R}^m)}\leq C \|\langle\cdot\rangle^{r'}\mathcal{F}(\varphi u)\|_{L^2(\mathbb{R}^m)} \|\langle\cdot\rangle^{r''}\mathcal{F}(\varphi_1 v)\|_{L^2(\mathbb{R}^m)}.
\end{equation*}
The claim now follows from the denseness of $\mathcal{D}(O)$ in $H_{loc} ^{r'} (O)$ and $H_{loc} ^{r''} (O)$.
\end{remark}

Our main result on the product of distributions in $\mathscr{H}_{L_1} ^{r',r_1} (O)$ and $\mathscr{H}_{L_2} ^{r'',r_2} (O)$ is the following theorem.

\begin{theorem}\label{main-the-prod-distk}
Let $O\subseteq \mathbb{R}^m$ be an open set. Let $r', r'',r_1,r_2\in \mathbb{R}$ and assume that
\begin{equation}\label{asumpton-ind-klsss}
r:=\min\{r_1+\min\{0,r''\},r_2+\min\{0,r'\}\}>m/2,\quad r'+r''\geq0.
\end{equation}
Let $L_1$ and $L_2$ be closed conic subsets of $O\times(\mathbb{R}^m\backslash\{0\})$ which satisfy
\begin{equation}
\label{uslovTe}
\text{there is no } (x,\xi)\in L_1 \text{ such that } (x,-\xi)\in L_2
\end{equation}
and define $L$ as in \eqref{Uslovi_na_indeks1}. The pullback $\delta^*:\mathcal{C}^{\infty}(O\times O)\rightarrow\mathcal{C}^{\infty}(O)$, $\delta^* f=f\circ \delta$, by the diagonal map $\delta:O\rightarrow O\times O$, $\delta(x)=(x,x)$, uniquely extends to a continuous map
\begin{equation}\label{pullback-by-diagonma}
\delta^*:\mathcal{D}'^r_L (O\times O)\rightarrow \mathcal{D}'^{r_*}_{\delta^* L}  (O)
\end{equation}
for any $r_*\in\mathbb{R}$ satisfying $r_*\leq r-\frac{m}{2}$. For any $u\in\mathscr{H}_{L_1} ^{r',r_1} (O)$ and $v\in\mathscr{H}_{L_2} ^{r'',r_2} (O)$, $u\otimes v\in \mathcal{D}'^r_L(O\times O)$ and $\delta^*(u\otimes v)\in\mathscr{H}_{\delta^* L} ^{s_*, \, r_*} (O)$ for any $s_*\in\mathbb{R}$ satisfying $s_*\leq \min\{r',r''\}$ and $s_*\leq r'+r''-\frac{m}{2}$ with the last inequality being strict if $-s_*$ or $r'$ or $r''$ equals $m/2$. Furthermore, the bilinear map
\begin{equation}\label{map-cont-pulba-mult}
\mathscr{H}_{L_1} ^{r',r_1} (O)\times\mathscr{H}_{L_2} ^{r'',r_2} (O)\rightarrow \mathscr{H}_{\delta^* L} ^{s_*, \, r_*} (O),\quad (u,v)\mapsto \delta^*(u \otimes v),
\end{equation}
is continuous and it restricts to the ordinary pointwise multiplication of smooth functions.
\end{theorem}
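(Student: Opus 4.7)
The plan is to combine three tools already in the paper---Theorem \ref{PPteorema} on pullbacks, Theorem \ref{hipo_tenz_proizv} on tensor products, and Remark \ref{rtksloh-kfvss} on multiplication in $H_{loc}$---and to reconcile the two a priori distinct continuous extensions that arise via a density argument.

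First, I would apply Theorem \ref{PPteorema} to the diagonal map $\delta:O\to O\times O$. Since $\delta$ is smooth of constant rank $k=m$ and ${}^t\delta'(x)(\eta_1,\eta_2)=\eta_1+\eta_2$, one computes
\begin{equation*}
\mathcal{N}_\delta=\{((x,x),(\eta_1,-\eta_1))\in O^2\times\mathbb{R}^{2m}\mid x\in O,\,\eta_1\in\mathbb{R}^m\}.
\end{equation*}
Inspecting the three components of $L$ in \eqref{Uslovi_na_indeks1} shows that $L\cap\mathcal{N}_\delta=\emptyset$ is equivalent to \eqref{uslovTe}. With ambient dimension $n=2m$ and rank $k=m$, the conditions $r-r_*\geq m/2$ and $r>m/2$ required by Theorem \ref{PPteorema} are met because of \eqref{asumpton-ind-klsss}, so the pullback \eqref{pullback-by-diagonma} is well-defined and continuous.

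Next, set $s:=\min\{r'+\min\{0,r''\},\,r''+\min\{0,r'\}\}$; then the tuple $(s,r,L,r',r'',r_1,r_2)$ satisfies \eqref{Uslovi_na_indeks1}-\eqref{Uslovi_na_indeks3}, so Theorem \ref{hipo_tenz_proizv} gives a continuous bilinear map $\mathscr{H}_{L_1}^{r',r_1}(O)\times\mathscr{H}_{L_2}^{r'',r_2}(O)\to\mathscr{H}_L^{s,r}(O\times O)$. Composing with the continuous embedding $\mathscr{H}_L^{s,r}(O\times O)\hookrightarrow\mathcal{D}_L'^{r}(O\times O)$ and with \eqref{pullback-by-diagonma} yields continuity of $(u,v)\mapsto \delta^*(u\otimes v)$ into $\mathcal{D}_{\delta^*L}'^{r_*}(O)$. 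Separately, Remark \ref{rtksloh-kfvss} extends smooth multiplication to a continuous bilinear map $M:H_{loc}^{r'}(O)\times H_{loc}^{r''}(O)\to H_{loc}^{s_*}(O)$ under the hypothesis on $s_*$.

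Finally, I would identify $\delta^*(u\otimes v)$ with $M(u,v)$ by a density argument. Pick sequences $u_n,v_n\in\mathcal{D}(O)$ approximating $u,v$ in the respective $\mathscr{H}$-topologies, which is possible by the sequential density of $\mathcal{D}(O)$ recorded just after Definition \ref{definicija_prostora}. Since the $\mathscr{H}$-topologies include the defining seminorms of the corresponding local Sobolev spaces, the approximation holds in $H_{loc}^{r'}(O)$ and $H_{loc}^{r''}(O)$ as well. Both continuous extensions agree with ordinary multiplication on the dense pair $(u_n,v_n)$, and both are continuous into $\mathcal{D}'(O)$ via the continuous inclusions of $\mathcal{D}_{\delta^*L}'^{r_*}(O)$ and $H_{loc}^{s_*}(O)$, so they must coincide on $\mathscr{H}_{L_1}^{r',r_1}(O)\times\mathscr{H}_{L_2}^{r'',r_2}(O)$. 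Consequently $\delta^*(u\otimes v)\in H_{loc}^{s_*}(O)\cap \mathcal{D}_{\delta^*L}'^{r_*}(O)=\mathscr{H}_{\delta^*L}^{s_*,r_*}(O)$, and continuity of \eqref{map-cont-pulba-mult} follows since the topology on the right-hand side is generated by the seminorms inherited from both factors. The restriction to ordinary multiplication of smooth functions is automatic, and I expect the identification step to be the main obstacle: without it one has two continuous extensions that could a priori differ, and only the sequential density of $\mathcal{D}(O)$ in the $\mathscr{H}$-spaces forces their agreement.
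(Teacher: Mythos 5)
Your proposal is correct and follows essentially the same route as the paper: verify $L\cap\mathcal{N}_\delta=\emptyset$ and apply Theorem \ref{PPteorema} with $n=2m$, $k=m$ to get \eqref{pullback-by-diagonma}, compose with Theorem \ref{hipo_tenz_proizv} for continuity into $\mathcal{D}'^{r_*}_{\delta^*L}(O)$, and then use Remark \ref{rtksloh-kfvss} together with the (sequential) density of $\mathcal{D}(O)$ to identify $\delta^*(u\otimes v)$ with the extended product $P(u,v)\in H^{s_*}_{loc}(O)$, so that membership in and continuity into $\mathscr{H}^{s_*,r_*}_{\delta^*L}(O)$ follow from the seminorms of the two factors. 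The paper phrases the identification step abstractly (two continuous maps agreeing on a dense subspace) while you spell it out with approximating sequences, which is legitimate since all spaces involved are Fr\'echet.
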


\begin{proof}
For all $x\in O$ it holds $ ^t \delta ' (x) = [I \ I]$, where $I$ denotes the identity $m\times m$ matrix. For $\xi, \eta \in \mathbb{R}^m$ we have $ ^t \delta ' (x) [\xi \ \eta]^t = [I \ I] \cdot [\xi \ \eta]^t = \xi+\eta$, which means
\begin{equation*}
\mathcal{N}_{\delta}= \{ (\delta (x) ; \xi , \eta) \mid \, ^t \delta ' (x) [\xi \ \eta]^t = 0 , x\in O \}= \{ (x,x ; \xi , -\xi) \mid x\in O , \xi \in \mathbb{R}^m \}.
\end{equation*}
If $(x,y;\xi,\eta)\in L \cap \mathcal{N}_{\delta}$, then $y=x,\eta=-\xi$. Due to the definition of $L$ and $\xi\neq 0$, we get $(x,\xi)\in L_1$ and $(x,-\xi)\in L_2$, which contradicts with the assumption \eqref{uslovTe}. Therefore $\mathcal{N}_{\delta} \cap L = \emptyset$. Since $\delta$ has constant rank $m$, Theorem \ref{PPteorema} implies that \eqref{pullback-by-diagonma} is well-defined and continuous when $r_*\leq r-\frac{m}{2}$ (as $r>\frac{m}{2}$ by assumption). For $u\in\mathscr{H}_{L_1} ^{r',r_1} (O)$ and $v\in\mathscr{H}_{L_2} ^{r'',r_2} (O)$, Theorem \ref{hipo_tenz_proizv} yields $u\otimes v\in \mathcal{D}'^r_L (O\times O)$ and together with the above it also shows that the bilinear mapping
\begin{equation}\label{map-pulbackcontionsks}
\mathscr{H}_{L_1} ^{r',r_1} (O)\times\mathscr{H}_{L_2} ^{r'',r_2} (O)\rightarrow \mathcal{D}'^{r_*}_{\delta^* L}(O),\quad (u,v)\mapsto \delta^*(u \otimes v),
\end{equation}
is well-defined and continuous. Of course, when $u$ and $v$ are smooth functions, $\delta^*(u\otimes v)=uv$. It remains to show that \eqref{map-cont-pulba-mult} is well-defined and continuous, i.e. that the codomain in \eqref{map-pulbackcontionsks} can be restricted to $\mathscr{H}_{\delta^* L} ^{s_*, \, r_*} (O)$ and the resulting map is continuous. In view of Remark \ref{rtksloh-kfvss}, $\mathcal{D}(O)\times \mathcal{D}(O)\rightarrow \mathcal{D}(O\times O)$, $(\psi,\phi)\mapsto \psi\phi$, uniquely extends to a continuous bilinear map $P:H_{loc} ^{r'} (O)\times H_{loc} ^{r''} (O)\rightarrow H_{loc} ^{s_*} (O)$. As it coincides with $(u,v)\mapsto \delta^*(u\otimes v)$ on the dense subspace $\mathcal{D}(O)$ of $\mathscr{H}_{L_1} ^{r',r_1} (O)$ and $\mathscr{H}_{L_2} ^{r'',r_2} (O)$, we infer that $P(u,v)=\delta^*(u\otimes v)$, $u\in \mathscr{H}_{L_1} ^{r',r_1} (O)$, $v\in \mathscr{H}_{L_2} ^{r'',r_2} (O)$. Hence \eqref{map-cont-pulba-mult} is well-defined and the above bounds together with the continuity of \eqref{map-pulbackcontionsks} verify that \eqref{map-cont-pulba-mult} is continuous, which completes the proof of the theorem.
\end{proof}

The theorem verifies that we can define the product of distributions in $\mathscr{H}_{L_1} ^{r',r_1} (O)$ and $\mathscr{H}_{L_2} ^{r'',r_2} (O)$ as the continuous bilinear mapping
\begin{equation}\label{map-cont-pulba-mult11}
\mathscr{H}_{L_1} ^{r',r_1} (O)\times\mathscr{H}_{L_2} ^{r'',r_2} (O)\rightarrow \mathscr{H}_{\delta^* L} ^{s_*, \, r_*} (O),\quad uv:= \delta^*(u \otimes v),
\end{equation}
where $L_1$, $L_2$, $L$ and $r', r'',r_1,r_2, r_*,s_*\in \mathbb{R}$ satisfy the assumptions in the theorem.\\
\indent Finally, we show that any other sensible definition of a product on $\mathscr{H}_{L_1} ^{r',r_1} (O)\times\mathscr{H}_{L_2} ^{r'',r_2} (O)$ that restricts to the ordinary product of smooth functions has to coincide with our definition.

\begin{proposition}\label{ext-mulgbsk}
Let $O\subseteq \mathbb{R}^m$ be an open set. Let $r', r'',r_1,r_2\in \mathbb{R}$ satisfy \eqref{asumpton-ind-klsss} and let the closed conic sets $L_1,L_2\subseteq O\times(\mathbb{R}^m\backslash\{0\})$ satisfy \eqref{uslovTe}. Let $X$ and $Y$ be spaces of distributions for which the following continuous and dense inclusions hold true:
\begin{equation*}
\mathcal{D}(O)\subseteq X\subseteq \mathscr{H}_{L_1} ^{r',r_1} (O)\quad \mbox{and}\quad \mathcal{D}(O)\subseteq Y\subseteq \mathscr{H}_{L_2} ^{r'',r_2} (O).
\end{equation*}
If $P:X\times Y\rightarrow \mathcal{D}'(O)$ is a separately continuous bilinear map which restricts to the ordinary pointwise product on $\mathcal{D}(O)\times\mathcal{D}(O)$ then $P(u,v)=uv$, $u\in X$, $v\in Y$, where the product on the right is the one defined above. In particular, the codomain can be restricted to $\mathscr{H}_{\delta^* L} ^{s_*, \, r_*} (O)$ with $s_*$ and $r_*$ as in Theorem \ref{main-the-prod-distk} and the map is continuous.
\end{proposition}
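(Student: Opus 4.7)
The plan is a standard two-step density argument, using Theorem \ref{main-the-prod-distk} as the ``anchor'' that the product $M(u,v):=\delta^*(u\otimes v)$ is separately continuous on the ambient spaces. First, I would set up the comparison. By Theorem \ref{main-the-prod-distk}, the map $M:\mathscr{H}_{L_1}^{r',r_1}(O)\times\mathscr{H}_{L_2}^{r'',r_2}(O)\to\mathscr{H}_{\delta^*L}^{s_*,r_*}(O)$ is jointly continuous bilinear. The defining seminorms of $\mathscr{H}_{\delta^*L}^{s_*,r_*}(O)$ include the seminorms $p_B$ inherited from $\mathcal{D}_{\delta^*L}^{\prime\, r_*}(O)$, so the inclusion $\mathscr{H}_{\delta^*L}^{s_*,r_*}(O)\hookrightarrow\mathcal{D}'(O)$ (with its strong topology) is continuous. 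Composing with the continuous inclusions $X\hookrightarrow\mathscr{H}_{L_1}^{r',r_1}(O)$ and $Y\hookrightarrow\mathscr{H}_{L_2}^{r'',r_2}(O)$, I obtain a separately continuous bilinear map $X\times Y\to\mathcal{D}'(O)$ that I also call $M$. By hypothesis, both $P$ and $M$ agree with the ordinary pointwise product on $\mathcal{D}(O)\times\mathcal{D}(O)$.

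Second, I would apply density twice. Fix $v\in\mathcal{D}(O)$. The linear maps $u\mapsto P(u,v)$ and $u\mapsto M(u,v)$ from $X$ to $\mathcal{D}'(O)$ are both continuous (the first by separate continuity of $P$, the second by the composition set up above) and agree on $\mathcal{D}(O)$. Since $\mathcal{D}(O)$ is dense in $X$ and $\mathcal{D}'(O)$ is Hausdorff, they agree on all of $X$. Next, fix $u\in X$ arbitrary. The linear maps $v\mapsto P(u,v)$ and $v\mapsto M(u,v)$ from $Y$ to $\mathcal{D}'(O)$ are both continuous and, by the previous step, they agree on the dense subspace $\mathcal{D}(O)\subseteq Y$. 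Hence they agree on all of $Y$, which gives $P(u,v)=\delta^*(u\otimes v)=uv$ for every $(u,v)\in X\times Y$.

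The ``in particular'' claim is now immediate. Since $P$ coincides with $M$ pointwise on $X\times Y$ and $M:X\times Y\to\mathscr{H}_{\delta^*L}^{s_*,r_*}(O)$ is jointly continuous (as the composition of the two continuous inclusions with the continuous bilinear map from Theorem \ref{main-the-prod-distk}), the codomain of $P$ may be restricted to $\mathscr{H}_{\delta^*L}^{s_*,r_*}(O)$ and the resulting map is continuous.

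I do not expect a real obstacle; the argument is essentially bookkeeping in topological vector spaces. The only point that needs to be stated explicitly is the continuity of the inclusion $\mathscr{H}_{\delta^*L}^{s_*,r_*}(O)\hookrightarrow\mathcal{D}'(O)$, so that the two-step density argument takes place in a single Hausdorff space where uniqueness of the extension is guaranteed; this continuity is built into the very definition of the topology on $\mathscr{H}_{\delta^*L}^{s_*,r_*}(O)$ via the seminorms $p_B$ for bounded $B\subseteq\mathcal{D}(O)$.
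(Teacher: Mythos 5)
Your proof is correct and takes essentially the same route as the paper: a density argument comparing $P$ with $(u,v)\mapsto\delta^*(u\otimes v)$, using the continuity supplied by Theorem \ref{main-the-prod-distk} together with the continuous inclusion of $\mathscr{H}_{\delta^* L} ^{s_*, \, r_*} (O)$ into $\mathcal{D}'(O)$. If anything, your explicit two-step version (first extending in $u$ for fixed $v\in\mathcal{D}(O)$, then in $v$ over $Y$) is slightly more careful than the paper's single step, which fixes $v\in Y$ and tacitly uses the identity $P(\varphi,v)=\varphi v$ for $\varphi\in\mathcal{D}(O)$ and general $v\in Y$ --- precisely the agreement your first density step establishes.
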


\begin{proof}
Let $v\in Y$ be arbitrary but fixed. The continuous map $X\mapsto \mathcal{D}'(O)$, $u\mapsto P(u,v)$, coincides with the continuous map $X\mapsto \mathcal{D}'(O)$, $u\mapsto uv$, on the dense subset $\mathcal{D}(O)$ of $X$ and consequently $P(u,v)=uv$, $u\in X$. The rest of the claims follow immediately from Theorem \ref{main-the-prod-distk}.
\end{proof}

\begin{remark}
In the claims of Theorem \ref{main-the-prod-distk}, the conditions on $r_*$ are, in general, more restrictive than the conditions on $s_*$. This suggests that a more refined variant of Theorem \ref{main-the-prod-distk} (providing better bounds on $r_*$) might hold true.
\end{remark}

With the help of Remark \ref{rtksloh-kfvss}, we can improve the part of Theorem \ref{main-the-prod-distk} concerning the bounds on the index $r_*$ in the special case when $L_j=F_j\times(\mathbb{R}^m\backslash\{0\})$, $j=1,2$, where $F_1$ and $F_2$ are  disjoint closed sets in $O$ (which further supports the above remark). In this case, notice that $\mathscr{H}_{L_1} ^{s,r} (O)=\{u\in H_{loc} ^s (O)\,|\, \singsupp_ru\subseteq F_1\}$, where $\singsupp_ru$ is the Sobolev singular support of order $r$ of $u$, i.e.
\begin{align}
\singsupp_{r} u := \{ x\in O \mid \neg \, (\exists O_x \text{ an open neighborhood of } x) \, u\in H_{loc} ^{r} (O_x)  \}
\end{align}
(recall that $pr_1 (WF^r (u))=\singsupp_{r} u$, where $pr_1(\cdot)$ stands for the projection on the first variable).

\begin{proposition}
Let $F_1$ and $F_2$ be disjoint closed sets in the open set $O\subseteq \mathbb{R}^m$ and set $L_j:=F_j\times(\mathbb{R}^m\backslash\{0\})$, $j=1,2$, $L_0:=L_1\cup L_2$. Let $r', r'',r_1,r_2\in \mathbb{R}$ and assume that
\begin{equation*}
r_1+r_2\geq 0\quad \mbox{and}\quad r'+r''\geq0.
\end{equation*}
With $\delta$ as in Theorem \ref{main-the-prod-distk}, the pullback $\delta^*:\mathcal{C}^{\infty}(O\times O)\rightarrow\mathcal{C}^{\infty}(O)$, $\delta^* f=f\circ \delta$, uniquely extends to the continuous bilinear map\footnote{With $L$ as in \eqref{Uslovi_na_indeks1}, notice that $\delta^*L=L_0$.}
\begin{equation*}
\mathscr{H}_{L_1} ^{r',r_1} (O)\times\mathscr{H}_{L_2} ^{r'',r_2} (O)\rightarrow \mathscr{H}_{L_0} ^{s_*, \, r_*} (O),\quad (u,v)\mapsto \delta^*(u \otimes v),
\end{equation*}
for any $r_*,s_*\in\mathbb{R}$ satisfying
\begin{gather}
s_*\leq \min\{r',r''\},\quad s_*\leq r'+r''-\frac{m}{2},\label{inshk-rtvk}\\
r_*\leq \min\{r_1,r_2\},\quad r_*\leq r_1+r_2-\frac{m}{2},\label{inq-st-for-klkst}
\end{gather}
with the last inequality in \eqref{inshk-rtvk} being strict if $-s_*$ or $r'$ or $r''$ equals $m/2$ and the last inequality in \eqref{inq-st-for-klkst} being strict if $-r_*$ or $r_1$ or $r_2$ equals $m/2$.
\end{proposition}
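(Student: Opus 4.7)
The plan is to build the bilinear extension by density. By Proposition \ref{Frese}, all three spaces in play are reflexive Fr\'echet, and $\mathcal{D}(O)$ is dense in both source spaces. Since $\delta^*(u\otimes v)$ restricts to ordinary pointwise multiplication on $\mathcal{D}(O)\times \mathcal{D}(O)$, it suffices to prove, for $u,v\in\mathcal{D}(O)$, bounds of the form $\pi(uv)\leq C\,\pi_1(u)\,\pi_2(v)$, where $\pi$ runs over a generating family of seminorms for the topology of $\mathscr{H}_{L_0}^{s_*, r_*}(O)$ and $\pi_1,\pi_2$ are continuous seminorms on the source spaces; the unique continuous bilinear extension is then supplied by a standard Cauchy-sequence argument using completeness. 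The footnote identity $\delta^* L=L_0$ is immediate from \eqref{pullback_skupa}: the contribution $L_1\times L_2$ forces $x\in F_1\cap F_2=\emptyset$ and vanishes, while the remaining two contributions give precisely $L_1$ and $L_2$.

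The seminorms on the target split into two families: the global ones $q_{s_*;\psi}$ with $\psi\in\mathcal{D}(O)$, and the microlocal ones $p_{r_*;\varphi,V}$ with $\varphi\in\mathcal{D}(O)$ and $V\subseteq\mathbb{R}^m$ a closed cone satisfying $(\supp\varphi\times V)\cap L_0=\emptyset$. Since $L_0=(F_1\cup F_2)\times(\mathbb{R}^m\backslash\{0\})$, the latter condition (whenever $V$ contains a nonzero vector) forces $\supp\varphi\cap(F_1\cup F_2)=\emptyset$. This is the key geometric observation: on such $\supp\varphi$, both $u$ and $v$ exhibit their full microlocal Sobolev regularity of orders $r_1$ and $r_2$.

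For the $q_{s_*;\psi}$ bound, pick $\psi_1\in\mathcal{D}(O)$ with $\psi_1\equiv 1$ on a neighborhood of $\supp\psi$, so that $\psi\cdot uv=(\psi u)(\psi_1 v)$ for smooth $u,v$. Applying Remark \ref{rtksloh-kfvss} to $\psi u\in H^{r'}(\mathbb{R}^m)$ and $\psi_1 v\in H^{r''}(\mathbb{R}^m)$ (legitimate by $r'+r''\geq 0$ and the hypotheses on $s_*$) yields
\begin{equation*}
q_{s_*;\psi}(uv)\leq C\,q_{r';\psi}(u)\,q_{r'';\psi_1}(v),
\end{equation*}
and the right-hand side is a product of continuous seminorms on $H_{loc}^{r'}(O)\supseteq\mathscr{H}_{L_1}^{r', r_1}(O)$ and $H_{loc}^{r''}(O)\supseteq\mathscr{H}_{L_2}^{r'', r_2}(O)$. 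For the $p_{r_*;\varphi,V}$ bound, since $\supp\varphi$ is compact and disjoint from the closed set $F_1\cup F_2$, one can choose $\varphi_1\in\mathcal{D}(O)$ with $\varphi_1\equiv 1$ on a neighborhood of $\supp\varphi$ and $\supp\varphi_1\cap(F_1\cup F_2)=\emptyset$. The disjointnesses $(\supp\varphi\times\mathbb{R}^m)\cap L_1=\emptyset$ and $(\supp\varphi_1\times\mathbb{R}^m)\cap L_2=\emptyset$ make $p_{r_1;\varphi,\mathbb{R}^m}(u)$ and $p_{r_2;\varphi_1,\mathbb{R}^m}(v)$ continuous seminorms on the respective source spaces. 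For smooth $u,v$ the factorization $\varphi\cdot uv=(\varphi u)(\varphi_1 v)$ places both factors in $H^{r_1}(\mathbb{R}^m)$ and $H^{r_2}(\mathbb{R}^m)$, so Remark \ref{rtksloh-kfvss} (using $r_1+r_2\geq 0$ and the hypotheses on $r_*$) gives
\begin{equation*}
p_{r_*;\varphi,V}(uv)\leq \|\langle\cdot\rangle^{r_*}\mathcal{F}(\varphi uv)\|_{L^2(\mathbb{R}^m)}\leq C\,p_{r_1;\varphi,\mathbb{R}^m}(u)\,p_{r_2;\varphi_1,\mathbb{R}^m}(v).
\end{equation*}

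Combining the two estimates yields joint continuity of the pointwise product $\mathcal{D}(O)\times\mathcal{D}(O)\to\mathscr{H}_{L_0}^{s_*, r_*}(O)$ in the inherited topologies, and density plus completeness deliver the unique continuous bilinear extension. There is no serious obstacle; the only step that requires care is choosing $\varphi_1$ so that in the factorization $\varphi\cdot uv=(\varphi u)(\varphi_1 v)$ \emph{both} factors acquire the microlocal exponents $r_1$ and $r_2$. That such a $\varphi_1$ exists is precisely what the disjointness $F_1\cap F_2=\emptyset$ guarantees, and it is this mechanism that produces the improved bounds on $r_*$ under just the condition $r_1+r_2\geq 0$, avoiding the stronger requirement $\min\{r_1+\min\{0,r''\},r_2+\min\{0,r'\}\}>m/2$ from Theorem \ref{main-the-prod-distk}.
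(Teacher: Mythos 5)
Your proof is correct, and its core is the same as the paper's: everything is reduced to the Sobolev multiplication estimate of Remark \ref{rtksloh-kfvss}, applied with exponents $(r',r'')\to s_*$ for the global seminorms $q_{s_*;\psi}$ and with $(r_1,r_2)\to r_*$ for the microlocal seminorms, followed by extension by density and completeness. The difference is in how the microlocal seminorms are handled: the paper first splits $uv$ into four pieces using cutoffs $\psi_1,\psi_2$ equal to $1$ near $F_1$ and $F_2$ with disjoint supports (the piece $(\psi_1u)(\psi_2v)$ vanishing), and only then applies Remark \ref{rtksloh-kfvss} to each remaining piece, arranging in each case that the cutoff multiplying $u$ avoids $F_1$ and the one multiplying $v$ avoids $F_2$; you instead observe directly that any nontrivial seminorm $p_{r_*;\varphi,V}$ of the target forces $\supp\varphi\cap(F_1\cup F_2)=\emptyset$, so the single factorization $\varphi uv=(\varphi u)(\varphi_1 v)$, with $\varphi_1=1$ near $\supp\varphi$ and $\supp\varphi_1\cap(F_1\cup F_2)=\emptyset$, already makes $p_{r_1;\varphi,\mathbb{R}^m}$ and $p_{r_2;\varphi_1,\mathbb{R}^m}$ continuous seminorms on the source spaces and yields the bound under \eqref{inq-st-for-klkst} and $r_1+r_2\geq 0$. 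This is a legitimate streamlining: the resulting estimates have the same product-of-continuous-seminorms form, so the same density argument gives the unique continuous bilinear extension (the paper instead invokes Proposition \ref{ext-mulgbsk} at the end to identify the extension with $\delta^*(u\otimes v)$; your uniqueness-by-density reasoning serves the same purpose, and your verification of the footnote $\delta^*L=L_0$ matches the paper's). Two cosmetic points: the sequential density of $\mathcal{D}(O)$ in the source spaces is the statement following \eqref{seminorme} (from \cite{PP}), not Proposition \ref{Frese} itself, and Proposition \ref{Frese} formally excludes the degenerate case $L_j=O\times(\mathbb{R}^m\backslash\{0\})$ (e.g. $F_1=O$, $F_2=\emptyset$), where the space is $H^{r'}_{loc}(O)$ and your estimates still go through unchanged.
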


\begin{proof}
Let $O_1,O_2\subseteq O$ be disjoint open sets which satisfy $F_1\subseteq O_1$ and $F_2\subseteq O_2$. Pick $\psi_1,\psi_2\in\mathcal{C}^{\infty}(O)$ such that $\psi_j=1$ on a neighborhood of $F_j$ and $\supp\psi_j\subseteq O_j$, $j=1,2$; consequently $\supp\psi_1\cap\supp\psi_2=\emptyset$. For $u,v\in \mathcal{D}(O)$, write
\begin{equation*}
uv= \underbrace{(\psi_1 u )(\psi_2 v )}_{=0}
	+ 
	(\psi_1 u )	((1 - \psi_2) v)
	+ 
	((1 - \psi_1) u)(\psi_2 v)+ ((1 - \psi_1) u)((1 - \psi_2) v).
\end{equation*}
Let $\varphi\in\mathcal{D}(O)$ satisfy $\supp\varphi\cap(F_1\cup F_2)=\emptyset$ (i.e. $(\supp\varphi\times \mathbb{R}^m)\cap L_0=\emptyset$) and pick $\phi\in\mathcal{D}(O)$ such that $\phi=1$ on a neighborhood of $\supp\varphi$ and $\supp\phi\cap(F_1\cup F_2)=\emptyset$. The assumptions on $r_*$ together with Remark \ref{rtksloh-kfvss} implies that there are $\varphi_j\in\mathcal{D}(O)$, $j=1,\ldots,6$, and $C>0$ such that
\begin{align*}
q_{r_*;\varphi}((\psi_1 u )	((1 - \psi_2) v))&=q_{r_*;\varphi}((\psi_1\phi u )	((1 - \psi_2) v))\leq Cq_{r_1;\varphi_1}(\psi_1\phi u ) q_{r_2;\varphi_2}((1 - \psi_2) v)\\
&=Cq_{r_1;\varphi_1\psi_1\phi}(u) q_{r_2;\varphi_2(1 - \psi_2)}(v),\\
q_{r_*;\varphi}(((1-\psi_1) u )(\psi_2 v))&=q_{r_*;\varphi}(((1-\psi_1) u )(\psi_2\phi v))\leq Cq_{r_1;\varphi_3}((1-\psi_1) u ) q_{r_2;\varphi_4}(\psi_2\phi v)\\
&=Cq_{r_1;\varphi_3(1-\psi_1)}(u) q_{r_2;\varphi_4\psi_2\phi }(v),\\
q_{r_*;\varphi}(((1 - \psi_1) u)((1 - \psi_2) v))&\leq Cq_{r_1;\varphi_5}((1 - \psi_1) u)q_{r_2;\varphi_6}((1 - \psi_2) v)\\
&= Cq_{r_1;\varphi_5(1 - \psi_1)}(u)q_{r_2;\varphi_6(1 - \psi_2)}(v).
\end{align*}
In view of these inequalities and Remark \ref{rtksloh-kfvss}, we deduce that $\mathcal{D}(O)\times \mathcal{D}(O)\rightarrow \mathcal{D}(O\times O)$, $(u,v)\mapsto uv$, uniquely extends to a continuous bilinear map $\mathscr{H}_{L_1} ^{r',r_1} (O)\times\mathscr{H}_{L_2} ^{r'',r_2} (O)\rightarrow \mathscr{H}_{L_0} ^{s_*, \, r_*} (O)$. Proposition \ref{ext-mulgbsk} implies that this map is given by $(u,v)\mapsto \delta^*(u\otimes v)$.
\end{proof}

\section*{Acknowledgments}
This research was  supported by the Science Fund of the Republic of Serbia, $\#$GRANT No 2727, \emph{Global and local analysis of operators and distributions} - GOALS. The author also acknowledges the support of the Ministry of Science, Technological Development and Innovation of the Republic of Serbia (Grants No. 451-03-137/2025-03/ 200125 $\&$ 451-03-136/2025-03/ 200125).


\begin{thebibliography}{9}

\bibitem{Adams} Adams, R., A., Fournier, J., J., F. \textit{Sobolev spaces}, Academic press, 2003

\bibitem{beh-h} Behzadan, A.. Holst, M., \textit{Multiplication in Sobolev spaces, revisited}, Ark. Mat. Volume: 59, Issue: 2, page 275-306, 2021.

\bibitem{Brouder} Brouder, C., Dang, N., V., Hélein, F. \textit{Continuity of the fundamental operations on distributions having a specified wave front set (with counterexample by Semyon Alesker)}, Studia Mathematica, Volume: 232, Issue: 3, page 201-226, 2016.

\bibitem{dab-br} Dabrowski Y., Brouder C., \textit{Functional properties of H\"ormander’s space of distributions having a specified wavefront set}, Commun. Math. Phys., Volume: 332, page 1345-1380, 2014.

\bibitem{Duistermaat}  Duistermaat, J., J., H\"ormander, L. \textit{Fourier integral operators. II}, Acta Math. 128, 183-269, 1972.

\bibitem{HNonlinear}  H\"ormander, L. \textit{Lectures on Nonlinear Hyperbolic Differential Equations}, Springer, 1997.

\bibitem{Hor1}  H\"ormander, L. \textit{The analysis of linear partial differential operators I. Distribution theory and fourier analysis}, Springer, 2003.

\bibitem{Kete2}  K\"othe, G. \textit{Topological Vector Spaces II}, Springer, New York, 1979.

\bibitem{PP}  Pilipovi\'c, S., Prangoski, B. \textit{Spaces of distributions with Sobolev wave front in fixed conic set: compactness, pullback by smooth maps and the compensated compactness theorem}, arxiv, 2024.

\bibitem{Schwartz}  Schwartz, L. \textit{Th\'eorie des distributions}, Hermann, Paris, 1966

\bibitem{TJPT}  Toft, J., Johannson, K., Pilipovi\'c, S, Teofanov, N. \textit{Sharp convolution and multiplication estimates in weighted spaces}, Analysis and Applications 2015 13:05, 457-480.

\end{thebibliography}
\end{document}